\numberwithin{equation}{section}
\newtheorem{theorem}{Theorem}[section]
\newtheorem{lemma}[theorem]{Lemma}
\newtheorem{definition}{Definition}[section]
\newtheorem{remark}{Remark}[section]
\newcommand{\iO}{\int_{\Omega}}
\title[Entropy method for reaction-diffusion systems]{The entropy method for reaction-diffusion systems \\without detailed balance: \\first order chemical reaction networks}
\begin{document}

\author[K. Fellner, W. Prager, B.Q. Tang]{Klemens Fellner, Wolfgang Prager, Bao Q. Tang}

\address{Klemens Fellner \hfill\break
Institute of Mathematics and Scientific Computing, University of Graz, Heinrichstrasse 36, 8010 Graz, Austria}
\email{klemens.fellner@uni-graz.at}

\address{Wolfgang Prager \hfill\break
Institute of Mathematics and Scientific Computing, University of Graz, Heinrichstrasse 36, 8010 Graz, Austria}
\email{wolfgang.prager@uni-graz.at}

\address{Bao Quoc Tang \hfill\break
Institute of Mathematics and Scientific Computing, University of Graz, Heinrichstrasse 36, 8010 Graz, Austria}
\email{quoc.tang@uni-graz.at}

\subjclass[2010]{35B35, 35B40, 35F35, 35K37, 35Q92}
\keywords{Reaction-diffusion systems; Entropy method; First order chemical reaction networks; Complex balance condition; Convergence to equilibrium}

\begin{abstract}
	In this paper, the applicability of the entropy method for the trend towards equilibrium for reaction-diffusion systems arising from first order chemical reaction networks is studied. 
	In particular, we present a suitable entropy  structure for weakly reversible reaction networks without detail balance condition.
	
	We show by deriving an entropy-entropy dissipation estimate that for any weakly reversible network each solution trajectory converges exponentially fast to the unique positive equilibrium with computable rates. This convergence is shown to be true even in cases when the diffusion coefficients of all but one species are zero. 
	
	For non-weakly reversible networks consisting of source, transmission and target components, it is shown that species belonging to a source or transmission component decay to zero exponentially fast while species belonging to a target component converge to the corresponding positive equilibria, which are determined by the dynamics of the target component and the mass injected from other components. The results of this work, in some sense, complete the picture of trend to equilibrium for first order chemical reaction networks.
\end{abstract}

\maketitle
\numberwithin{equation}{section}
\newtheorem{example}{Example}[section]

\section{Introduction and Main results}
This paper investigates the applicability of the entropy method and proves the convergence to equilibrium for reaction-diffusion systems, which do not satisfy a detailed balance condition.  

The mathematical theory of (spatially homogeneous) chemical reaction networks goes back to the pioneer works of e.g. Horn, Jackson, Feinberg and the Volperts, see \cite{Fe79, Fe87, FeHo74, Ho72, Ho74, HoJa72,Vol72,Vol94} and the references therein. 
The aim is to study the dynamical system behaviour of reaction networks {\it independently of the values of the reaction rates}. It is conjectured since the early of 1970s that in a complex balanced system, the trajectories of the corresponding dynamical system always converge to a positive equilibrium. This conjecture was given the name Global Attractor Conjecture by Craciun et al. \cite{Cra09}. The conjecture in its full generality is -- up to our knowledge -- still unsolved so far, despite many attempts have been made by mathematicians to attack this problem. 

From the many previous works concerning the large time behaviour of chemical reaction networks, the majority of the existing results considers the spatially homogeneous ODE setting. The PDE setting in terms of reaction-diffusion systems is less studied. Also detailed quantitative statements like, e.g. rates of convergence to equilibrium, constitute frequently open questions even in the ODE setting.
\smallskip

Our general aim is to prove quantitative results on the large-time behaviour of chemical reaction networks modelled by reaction-diffusion systems. 
In the present work, we study reaction-diffusion systems arising from first order chemical reaction networks and show that all solution trajectories converge exponentially to corresponding equilibria with explicitly computable rates.
\smallskip

Our approach applies the so called entropy method.
Going back to ideas of Boltzmann and Grad, the fundamental idea of the entropy method is to quantify the monotone decay of a suitable entropy  (e.g. a convex Lyapunov) functional 
in terms of a \emph{functional inequality} connecting the time-derivative of the entropy, the so called entropy dissipation functional, 
back to the entropy functional itself, i.e. to derive a so called \emph{entropy entropy-dissipation (EED)} inequality. Such an EED inequality can only hold provided that all conservation laws are taken into account. After having established an EED inequality and applying it to global solutions of a dissipative evolutionary problem, a direct Gronwall argument implies convergence to equilibrium in relative entropy with rates and constants, 
which can be made explicit. 

By being based on functional inequalities (rather than on direct estimates on the solutions), a major advantage of the entropy method is its robustness with respect to model variations and generalisations. 
Moreover,  the entropy method is per se a fully nonlinear approach. 
\smallskip

The fundamental idea of the entropy method originates from the pioneer works of kinetic theory and from names like Boltzmann and Grad in order to 
investigate the trend to equilibrium of e.g. models of gas kinetics. 

A systematic effort in developing the entropy method for dissipative evolution equations started not until much later, 
see e.g. the seminal works \cite{tos96,toscani_villani1,CJMTU,AMTU,DVinhom1} and the references therein 
for scalar (nonlinear) diffusion or Fokker-Planck equations, and in particular 
the paper of Desvillettes and Villani concerning the trend to equilibrium for the spatial inhomogeneous Boltzmann equation \cite{DVinhom2}.  
The derivation of  EED inequalities for scalar evolution equations is typically based on the Barky-Emery strategy (see e.g. \cite{CJMTU,AMTU}), 
which seems to fail (or be too involved) to apply to systems.
\smallskip

The great challenge of the entropy method for systems is, therefore, to be able to derive an entropy entropy-dissipation inequality, which summarises (in the sense of measuring with a convex entropy functional) the entire dissipative behaviour of solutions to a (possibly nonlinear) dynamical system to which the EED inequality shall be applied to. Preliminary results based on a (non-explicit) compactness-contradiction argument in 2D were obtained e.g. in \cite{Gro92,GGH96,GH97} in the context semiconductor drift-diffusion models. 

The first proof of an EED inequality with explicitly computable constants and rates
for specific nonlinear reaction-diffusion systems was shown in \cite{DeFe06}
and followed by e.g. \cite{DeFe_Con, DeFe08, DeFe15, BaFeEv14, MiHaMa14}. 
The application of these EED inequalities to global solutions of the corresponding reaction-diffusion systems proves (together with Csisz\'ar-Kullback-Pinsker type inequalities) the explicit convergence to equilibrium for these reaction-diffusion systems. 

We emphasise that all these previous results on entropy methods for systems assumed a {\it detailed balance condition} and, thus, features the free energy functional as a natural convex Lyapunov functional. 
\medskip

A main novelty of the paper lies in demonstrating how the entropy method can be generalised to first order reaction networks without detailed balance equilibria. In particular we shall consider firstly \emph{weakly reversible networks} and secondly even more general \emph{composite systems consisting of source, transmission and target components} (see below for the 
precise definitions). 

We feel that it is important to point out that while there are certainly many classical approaches by which linear reaction-diffusion systems can be successfully dealt with, our task at hand is to clarify the entropic structure and the applicability of the entropy method for linear reaction networks as a first step before being able to turn to nonlinear problems in the future. 
{See \cite{DFT} for such a generalisation of the method to nonlinear reaction-diffusion systems satisfying the so-called complex balance condition (see Definition \ref{ComplexBalance} below).} 
\medskip

The goal of this present work is to prove the explicit convergence to equilibrium for the complex balanced and more general reaction-diffusion systems corresponding to first order reaction networks. To be more precise, we study first order reaction networks of the form
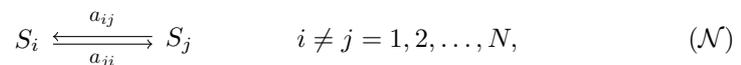
\begin{figure}[htp]
\begin{flushright}
\scalebox{1}[1]{
\begin{tikzpicture}
\node (a) {$S_{i}$} node (b) at (2,0) {$S_{j}$} node (c) at (5,0) {$i\not=j = 1,2,\ldots,N,$} node (d) at (9,0) {$(\mathcal{N})$};
\draw[arrows=->] ([xshift =0.5mm, yshift=-0.5mm]a.east) -- node [below] {\scalebox{.8}[.8]{$a_{ji}$}} ([xshift=-0.5mm,yshift=-0.5mm]b.west);
\draw[arrows=->] ([xshift =-0.5mm,yshift=0.5mm]b.west) -- node [above] {\scalebox{.8}[.8]{$a_{ij}$}} ([xshift=0.5mm,yshift=0.5mm]a.east);
\end{tikzpicture}} \caption{A first-order chemical reaction network}\label{Reaction}
\end{flushright}
\end{figure}\\
where $S_i, i = 1,2,\ldots, N$, are different chemical substances (or species) and $a_{ij}, a_{ji} \geq 0$ are reaction rate constants. In particular, $a_{ij}$ denotes the reaction rates from the species $S_j$ to $S_i$. 

First order reaction networks appear in many classical models, see e.g. \cite{Smo,Rot}. More recently, first order catalytic reactions are used to model transcription and translation of genes in \cite{TVO}. The evolution of the surface morphology during epitaxial growth involves the nucleation and growth of atomic islands, and these processes may be described by first order adsorption and desorption reactions coupled with diffusion along the surface. A first order reaction network can also be used to describe the reversible transitions between various conformational states of proteins (see e.g. \cite{Mayor03}). RNA also exists in several conformations, and the transitions between various folding states follow first order kinetics (see \cite{Bok03}). 
\medskip

In the present paper, we investigate the entropy method and the trend to equilibrium of reaction-diffusion systems modelling first order reaction networks with mass action kinetics.
More precisely, we shall consider the reaction network $\mathcal{N}$ in the context of reaction-diffusion equations and 
assume that  for all $i=1,2,\ldots,N$ the substances $S_i$ are described by spatial-temporal concentrations $u_i(x,t)$ at position $x\in \Omega$ and time $t\geq 0$. 
Here, $\Omega$ shall denote a bounded domain 
$\Omega \subset \mathbb{R}^n$ with sufficiently smooth boundary $\partial\Omega$ (that is 
$\partial\Omega\in C^{2+\alpha}$ to avoid all difficulties with boundary regularity, although the below methods should equally work under weaker assumptions) and the outer unit normal $\nu(x)$ for all $x\in\partial\Omega$.
Due to the rescaling $x\to |\Omega|^{1/n} x$, we can moreover 
consider (without loss of generality) domains with normalised volume, i.e.
$$
|\Omega|=1.
$$

In addition, we assume that each substance $S_i$ diffuses with a diffusion rate $d_i\geq 0$ for all $i=1,2,\ldots,N$. 
Finally, we shall assume mass action law kinetics as model for the reaction rates, which leads to the following linear reaction-diffusion system:
\begin{equation}\label{VectorSystem}
	\begin{cases}
		X_t = D\Delta X + AX, &\qquad x\in\Omega, \qquad t>0,\\
		\partial_{\nu}X = 0, &\qquad x\in\partial\Omega, \qquad t>0,\\
		X(x,0) = X_0(x)\ge0, &\qquad x\in\Omega,
	\end{cases}
\end{equation}
where $X(x,t) = [u_1(x,t), u_2(x,t), \ldots, u_N(x,t)]^{T}$ denotes the vector of concentrations subject to non-negative initial conditions $X_0(x) = [u_{1,0}(x)\ge0, u_{2,0}(x)\ge0, \ldots, u_{N,0}(x)\ge0]^T$, $D = \text{diag}(d_1,d_2,\ldots,d_N)$ denotes the diagonal diffusion matrix and the reaction matrix $A = (a_{ij}) \in \mathbb{R}^{N\times N}$ satisfies the following conditions:
\begin{equation}\label{a_jj}
\begin{cases}
	a_{ij} \geq 0, &\qquad \text{for all } i\not=j, \quad i,j =1,2,\ldots,N,\\ 
	a_{jj} = -\sum_{i=1,i\not= j}^{N}a_{ij}, &\qquad \text{for all } j =1,2,\ldots,N.
\end{cases}
\end{equation}

The conditions \eqref{a_jj} on the reaction matrix $A$ imply in particular that the vector $(1,1,\ldots,1)^T$ constitutes a left-eigenvector 
corresponding to the eigenvalue zero. Together with homogeneous Neumann boundary conditions this implies that solutions to \eqref{VectorSystem} admit the following {\it conservation of total mass}\,:
\begin{equation}\label{MassConservation}
	\sum_{i=1}^{N}\int_{\Omega}u_i(x,t)dx = \sum_{i=1}^{N}\int_{\Omega}u_{i,0}(x)dx =: M>0, \qquad \text{ for all } t>0,
\end{equation}
where $M>0$ is the {\it initial total mass}, which we shall assume positive. 

If $X(x,t)\equiv X(t)$, then system \eqref{VectorSystem} reduces to the corresponding space-homogeneous ODE model. 
Independently of PDE- or ODE-setting, we recall the
following definitions of equilibria from e.g. \cite{HoJa72,Fe79,Vol94}.

\begin{definition}[Homogeneous Equilibrium]\label{Equilibria}\hfill\\
A 
state $X_{\infty} = (u_{1,\infty}, u_{2,\infty}, \ldots, u_{N,\infty})$ is called a 
homogeneous {equilibrium} or shortly equilibrium of the first order reaction network $\mathcal{N}$ if $AX_{\infty}=0$.
\end{definition}
\begin{definition}[Detailed Balance Equilibrium]\label{DetailedBalance}\hfill\\
A positive equilibrium state $X_{\infty} = (u_{1,\infty}, u_{2,\infty}, \ldots, u_{N,\infty})>0$ is called a {detailed balance equilibrium} for the reaction network $\mathcal{N}$ if a positive reaction rate constant $a_{ij}>0$ for $i\neq j$ implies also a positive reversed reaction rate constant $a_{ji}>0$ and that the forward and backward reaction rates balance at equilibrium, i.e. 
	$$
		a_{ji}u_{i,\infty} = a_{ij}u_{j,\infty}
	$$
The reaction network $\mathcal{N}$ is called to satisfy the detailed balance condition if it admits a detailed balance equilibrium.
\end{definition}

\begin{definition}[Complex Balance Equilibrium]\label{ComplexBalance}\hfill \\
A positive equilibrium state $X_{\infty} = (u_{1,\infty}, u_{2,\infty}, \ldots, u_{N,\infty})>0$ is called a complex balance equilibrium for the reaction network $\mathcal{N}$ if for all $k=1,2,\ldots,N$, the total in-flow into the substance $S_k$ balances in equilibrium the total 
	out-flow from $S_k$ to all other substances $S_i$, i.e.
	$$
		\sum_{\{1\leq i\leq N:\; a_{ki}>0\}}a_{ki}u_{i,\infty} = \biggl(\sum_{\{1\leq j\leq N:\; a_{jk}>0\}}a_{jk}\biggr)u_{k,\infty}.
	$$
The reaction network $\mathcal{N}$ is called complex balanced if it admits a complex balance equilibrium. 
Moreover for complex balanced chemical reaction networks, all equilibria are complex balanced, see e.g. \cite{Ho72}. 

\end{definition}

\begin{example}[Detailed balance equilibria are complex balance equilibria]\hfill \\
It is easy to see that detailed balance equilibria are also complex balance equilibria while the reverse does not hold in general, even for reversible networks. For example, consider the reaction network in Figure \ref{ReversibleNet},
\begin{figure}[htp]
\begin{center}
\scalebox{1}[1]{
\begin{tikzpicture}
 \node (a) {$S_1$} node (b) at (2,0) {$S_2$} node (c) at (0,-2) {$S_3$};
 \draw[arrows=->] ([xshift =0.5mm, yshift=-0.5mm]a.east) -- node [below] {\scalebox{.8}[.8]{$a_{21}$}} ([xshift=-0.5mm,yshift=-0.5mm]b.west);
 \draw[arrows=->] ([xshift =-0.5mm,yshift=0.5mm]b.west) -- node [above] {\scalebox{.8}[.8]{$a_{12}$}} ([xshift=0.5mm,yshift=0.5mm]a.east);
\draw[arrows=->] ([xshift=-.5mm]a.south) -- node [left] {\scalebox{.8}[.8]{$a_{31}$}} ([xshift=-.5mm]c.north);
\draw[arrows=->] ([xshift=.5mm]c.north) -- node [right] {\scalebox{.8}[.8]{$a_{13}$}} ([xshift=.5mm]a.south);
\draw[arrows=->] ([xshift=1mm]c.north east) -- node[above left] {\scalebox{.8}[.8]{$a_{23}$}} ([xshift=.5mm]b.south west);
\draw[arrows=->] ([xshift=-1mm]b.south) -- node[below right] {\scalebox{.8}[.8]{$a_{32}$}} ([xshift=1mm, yshift=1mm]c.east);
\end{tikzpicture}}\caption{A reversible network}\label{ReversibleNet}
\end{center}
\end{figure}
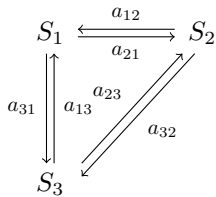
where all reaction rates constants $a_{ij}>0$ are assumed positive and the network is thus fully reversible. 
The corresponding reaction-diffusion system with homogeneous Neumann boundary conditions
\begin{equation}\label{3x3}
	\begin{cases}
		\partial_tu_1 - d_1\Delta u_1 = -(a_{21}+a_{31})u_1 + a_{12}u_2 + a_{13}u_3,\\
		\partial_tu_2 - d_2\Delta u_2 = a_{21}u_1 -(a_{12}+a_{32})u_2 + a_{23}u_3,\\
		\partial_tu_3 - d_3\Delta u_3 = a_{31}u_1 + a_{32}u_2 -(a_{13}+a_{23})u_3,\\
		\partial_{\nu} u_1 = \partial_{\nu} u_2 = \partial_{\nu} u_3 = 0.
 \end{cases}
\end{equation}
exhibits the constant equilibrium $X_{\infty} = (u_{1,\infty}, u_{2,\infty}, u_{3,\infty})$ satisfying $AX_{\infty}=0$, i.e.
\begin{equation}\label{equi3x3} 
	\begin{cases}
		a_{12}u_{2,\infty} + a_{13}u_{3,\infty} = (a_{21}+a_{31})u_{1,\infty},\\
		a_{21}u_{1,\infty}  + a_{23}u_{3,\infty} = (a_{12}+a_{32})u_{2,\infty},\\
		a_{31}u_{1,\infty} + a_{32}u_{2,\infty} = (a_{13}+a_{23})u_{3,\infty},
	\end{cases}
\end{equation}
which has a unique nontrivial solution once the mass conservation \eqref{MassConservation} is taken into account. 

According to Definition \ref{ComplexBalance}, it is clear that system \eqref{equi3x3} constitutes  a complex balance equilibrium for all reaction rate constants $a_{ij}>0$.
For $X_{\infty}$ to be a detailed balance equilibrium, however, it is additionally necessary that
\begin{equation}\label{detailbalance3x3}
	\begin{cases}
	a_{12}u_{2,\infty} = a_{21}u_{1,\infty},\\
	a_{23}u_{3,\infty} = a_{32}u_{2,\infty},\\
	a_{31}u_{1,\infty} = a_{13}u_{3,\infty},
	\end{cases}
\end{equation}
which obviously implies \eqref{equi3x3}. Yet the equations \eqref{detailbalance3x3} can only have a solution if 
\begin{equation}\label{detailbalance3x3cond}
	\frac{a_{12}\cdot a_{23}\cdot a_{31}}{a_{21}\cdot a_{32}\cdot a_{13}} = 1,
\end{equation}
holds; in other words if the product of the reaction rate constants multiplied in the clockwise sense of the above reaction network graph equals the product of the reaction rate constants multiplied in the counterclockwise sense. The condition \eqref{detailbalance3x3cond} 
is thus necessary and sufficient for system \eqref{3x3} to admit a detailed balance equilibrium. 
\end{example}

\begin{remark}[General definition of detailed and complex balance]\hfill\\
The concepts of detailed balance and complex balance are also defined for general higher order chemical reaction networks, see e.g. \cite{Ho72}. For simplicity, we stated here the definition corresponding to the first order network $\mathcal{N}$. In general, one can roughly say that a state $X_{\infty}$ is called a complex balanced equilibrium if at equilibrium the total in-flow to each specie $S_i$ is equal to the total out-flow from $S_i$.
\end{remark}
\begin{remark}[Detailed balance and reversibility]\hfill\\
It follows from Definition \eqref{DetailedBalance} that if $\mathcal{N}$ satisfies the detailed balance condition, then it is also reversible in the sense that for any reaction $S_i \rightarrow S_j$ also the reverse reaction $S_j \rightarrow S_i$ takes place.
\end{remark}
\medskip

The set of complex balanced systems is much larger than the one of detailed balance systems. Horn already gave necessary and sufficient conditions for a network to satisfy the complex balance condition in \cite{Ho72}.
For convenience of the reader, we present in the following the associated definitions of directed graphs as representations of reaction networks. The image of the associated graphs will also help following some of our main estimates. 
\smallskip

A directed graph $G$ corresponding to a given  reaction network $\mathcal{N}$ is defined by considering the 
substances $S_i, i = 1,2,\ldots, N,$ as the $N$ nodes of $G$, which are connected for all $i\not=j = 1,2,\ldots, N$ 
by an edge with starting node $S_i$ and finishing node $S_j$ if and only if  the reaction $S_i \xrightarrow{a_{ji}} S_j$ occurs with a positive reaction rate constant $a_{ji}>0$.

\begin{definition}[Linkage classes partition of a first order reaction network, Connected networks]\label{linkageclass}\hfill\\
\textcolor{black}{
A linkage class  $\mathcal{L}$ of a first order network $\mathcal{N}$ is a maximal set of connected substances, i.e. 
$S_i, S_j \in \mathcal{L}$ implies that $S_i$ and $S_j$ are connected (in the sense {that there exist $S_i \equiv S_{r_1},S_{r_2}\ldots, S_{r_{k-1}}, S_{r_k} \equiv S_j$ such that for each $1\le \ell\le k-1$, either the reaction $S_{r_\ell} \to S_{r_{\ell+1}}$ or $S_{r_{\ell+1}}\to S_{r_{\ell}}$ happens}) but  $S_i \in \mathcal{L}$ and 
$S_j \not\in \mathcal{L}$ implies that $S_i$ and $S_j$ are not connected. 
}

\textcolor{black}
{If a reaction network consists only of one linkage class, we shall call such a network \emph{connected.}} 
\end{definition}

\begin{definition}[Weak reversibility of a first order reaction network]\label{weaklyreversible}\hfill\\
A {\color{black}first order reaction network $\mathcal N$} is called weakly reversible if for any {\color{black} reaction} $S_i \rightarrow S_j$ with $i\not= j$, there exists a {\color{black} chain of reactions} $S_j \equiv S_{j_1} \rightarrow S_{j_2}\rightarrow\ldots \rightarrow S_{j_r} \equiv S_i$ where $S_{j_1}, S_{j_2}, \ldots, S_{j_r}$ are other {\color{black}chemical substances of $\mathcal N$}.  

\textcolor{black}{If a reaction network $\mathcal N$ is weakly reversible, then we also call the corresponding directed graph $G$  \emph{weakly reversible}.}
\end{definition}

\begin{definition}[Strongly connected components of a directed graph]\label{stronglyconnected}\hfill\\
A subgraph $H\subset G$ of a directed graph $G$ is called a strongly connected component  if for any two nodes $S_i, S_j$ in $H$, we can find a path from $S_i$ to $S_j$ of the form $S_i \rightarrow S_{i_1} \rightarrow \ldots \rightarrow S_{i_r} \rightarrow S_j$ with all $S_{i_1}, S_{i_2}, \ldots, S_{i_r}$ belonging to $H$. 

\textcolor{black}{We call a first order reaction network $\mathcal N$ strongly connected when its corresponding graph $G$ is \emph{strongly connected}.}
\end{definition}

\begin{remark}[Partition of weakly reversible first order reaction networks $\mathcal{N}$ into disjoint strongly connected components/subnetworks]\hfil\label{Partitions}\\	
\textcolor{black}{
Firstly, it follows directly from Definition \ref{linkageclass} that any first order reaction network $\mathcal{N}$
can be uniquely partitioned into a pairwise disjoint union of linkage classes and each 
linkage class $\mathcal{L}$ constitutes a connected subnetwork $\mathcal{N}_{\mathcal{L}}$.
In particular, for a weakly reversible first order reaction network $\mathcal N$, each linkage class $\mathcal{L}$ 
forms a connected weakly reversible subnetwork $\mathcal{N}_{\mathcal{L}}$
and it is straightforward to show that  the directed graph corresponding to $\mathcal{N}_{\mathcal{L}}$ is strongly connected according to Definition \ref{stronglyconnected}. (Consider that for all reactions being part of the connection between $S_i, S_j\in \mathcal{N}_{\mathcal{L}}$, the weak reversibility implies the existence of a returning chain of reactions. Thus, there exist  chains of reactions connecting $S_i$ to $S_j$ and vice versa.)  
}
black
\textcolor{black}{
Secondly, any directed graph $G$ can be partitioned into a pairwise disjoint union of strongly connected components, 
all of which are weakly reversible according to Definition \ref{weaklyreversible}. Note that these strongly connected components 
can still be connected via ``non-weakly-blackreversible" reactions (see e.g. Figure \ref{NonReversibleReaction}).
Therefore, for general directed graphs, multiple strongly connected components may constitute one linkage class. 
However, if the directed graph $G$ is additionally weakly reversible, then each strongly connected component has to constitute exactly one 
linkage class since otherwise we have already seen that weakly reversible subnetworks $\mathcal{N}_{\mathcal{L}}$ corresponding to one linkage class $\mathcal{L}$ are strongly connected. 
}

\textcolor{black}{
Thus, for weakly reversible first order reaction networks $\mathcal{N}$, the partition of linkage classes  
is identical to the partition of strongly connected components of the corresponding directed graphs.
}

{\color{black} Therefore, with a marginal abuse of notation, we will use the terminology ``strongly connected component" or ``strongly connected subnetwork" both for such a connected weakly reversible first order reaction subnetwork $\mathcal{N}_{\mathcal{L}}$ and its corresponding 
strongly connected subgraph/component.} 

\end{remark}

\begin{remark}[\textcolor{black}{Linkage classes of first order  reaction networks can be treated independently}]\hfil\label{StronglyConnected}\\	
For first order reaction networks, each node represents exactly one substance.
\textcolor{black}{
Thus, any linkage class of a first order reaction network 
can be treated independently from the others. 
In particular, all the strongly connected components of 
a weakly reversible first order reaction network can be treated independently since these subnetworks form different linkage classes. 
}

For higher order reaction networks, 
\textcolor{black}{where the nodes of the corresponding graphs are so-called complexes consisting of multiple substances,}
this is not necessarily true since 
one substance might need to be represented by different nodes. 
%
\end{remark}

\medskip
Because of Remarks \ref{Partitions} and \ref{StronglyConnected}, we will consider in Section \ref{weakly}
\textcolor{black}{weakly reversible first order networks partitioned into 
strongly 
connected first order reaction subnetworks $\mathcal{N}_{\mathcal{L}}$, 
and each strongly connected component $\mathcal{N}_{\mathcal{L}}$ can (w.l.o.g) be treated independently.}
In Section \ref{non-weakly}, we will consider (w.l.o.g) connected reaction networks $\mathcal{N}$ consisting of  one linkage class, yet we shall not assume {\color{black} weak reversibility. Hence the corresponding directed graphs are not strongly connected and may consists of multiple strongly connected components, but the underlying undirected graphs are connected (see e.g. Figure \ref{NonReversibleReaction})}. 

\medskip
%
%

%

\begin{lemma}[Strongly connected networks, irreducible reaction matrices and complex balance equilibria]\label{Characteristic}\hfil\\
\textcolor{black}{
For any first order reaction network $\mathcal N$ the following statements are equivalent:}
\begin{itemize}
\item 
The first order reaction network $\mathcal{N}$ is \textcolor{black}{strongly connected}.
\item The corresponding reaction matrix $A$ of $\mathcal{N}$ is irreducible. 
\item 
The first order reaction network $\mathcal{N}$ is complex balanced and for 
any positive mass $M>0$ 
(as set by the conservation law \eqref{MassConservation}), and there exists of a unique, positive 
complex balance equilibrium $X_{\infty} = (u_{1,\infty}, u_{2,\infty}, \ldots, u_{N,\infty})>0$ of system \eqref{VectorSystem}, 
which satisfies 
\begin{equation}\label{Equilibrium}
		\begin{cases}
			AX_{\infty} = 0,\\
			\sum_{i=1}^{N}u_{i,\infty} = M>0.
		\end{cases}
\end{equation}
\end{itemize}

\end{lemma}
\begin{proof}
The equivalence of strong connectivity for first order networks and irreducibility of the reaction matrix $A$ follows e.g. from \cite[Definition 2.1, page 46]{Seneta} and \cite[Theorem 3.2, page 78]{Min88}. Next, 
the Perron-Frobenius theorem implies for any irreducible reaction matrix $A$ and any positive mass $\sum_{i=1}^{N}u_{i,\infty}=M>0$ the existence of a unique positive equilibrium, see e.g. \cite{Seneta,Per07} and Lemma \ref{UniqueEqui} below.
 This equilibrium satisfies $AX_{\infty} = 0$ and is thus a complex 
balance equilibrium according to Definition \ref{ComplexBalance}. Hence, the strongly connected first order reaction network $\mathcal{N}$ is complex balanced (independently of the value of $M$).
Finally, Lemma \ref{UniqueEqui} below implies that strongly connected first order reaction networks possessing unique positive equilibrium (for fixed $M>0$) have irreducible reaction matrices $A$. 
\end{proof}

\begin{remark}[Complex balanced higher order systems are necessarily weakly reversible]\hfill \\
For higher order reaction network, it holds only true that systems with complex balance equilibrium
are necessarily weakly reversible. Thus, weakly reversible systems constitute the more general class of reaction networks.    
\end{remark}

\begin{remark}\label{PDEsetting}
The equilibrium $X_{\infty}$ in \eqref{Equilibrium} is spatially homogeneous. 
Thus, it coincides with the equilibrium for the corresponding spatially homogeneous ODE system $X_t = AX$ of the reaction network given in Figure \ref{Reaction}. In \cite{Ar11} or \cite{SiMa}, the authors proved that $X(t) \longrightarrow X_{\infty}$ as $t\longrightarrow +\infty$. However, the method used in this paper cannot be  directly applied to prove the convergence to equilibrium for PDE system \eqref{VectorSystem}. 
\end{remark}

The first main result of this paper concerns the convergence to equilibrium for weakly reversible reaction networks of the form displayed in Figure \ref{Reaction}. 
Our method of proof applies the entropy method to prove explicit exponential convergence of solutions of system \eqref{VectorSystem} to the unique equilibrium.

As mentioned above, all previous results of explicit EED inequalities 
(see e.g. \cite{DeFe06,DeFe_Con,DeFe08,DeFe15,BaFeEv14, MiHaMa14})
considered reaction-diffusion systems satisfying a detailed balance condition.
\medskip

In the current paper, we shall show that 
the following quadratic relative entropy between any two solutions $X=(u_1,\ldots,u_N)$ and $Y=(v_1,\ldots,v_N)$
\begin{equation}\label{RelativeEntropy_Quad}
	\mathcal{E}(X|Y)(t) = \sum_{i=1}^{N}\int_{\Omega}\frac{|u_i|^2}{v_i}dx
\end{equation}
is an entropy functional, see Lemma \ref{ExplicitEnDiss} below, which is the first key result of this paper.

In particular, we can consider the special case $Y=X_{\infty}$ for such an entropy functional.  
By using the linearity of first order systems, it is then straightforward to check (by using \eqref{a_jj} and $AX_{\infty}=0$)
that the quadratic relative entropy towards an equilibrium state $X_{\infty}$, i.e. 
\begin{equation}\label{RelativeEntropy}
	\mathcal{E}(X-X_{\infty}|X_{\infty}) = \sum_{i=1}^{N}\int_{\Omega}\frac{|u_i - u_{i,\infty}|^2}{u_{i,\infty}}dx
\end{equation}
is equally an entropy functional, which decays monotone in time according to 
the following explicit form of the entropy dissipation functional $\frac{d}{dt}\mathcal{E}(X- X_{\infty}|X_{\infty})=-\mathcal{D}(X-X_{\infty}|X_{\infty})$:
\begin{equation}\label{EntropyDissipation}
\begin{aligned}
\mathcal{D}(X-X_{\infty}|X_{\infty}) 
&= 2\sum_{i=1}^{N}d_i\int_{\Omega}\frac{|\nabla (u_i- u_{i,\infty})|^2}{u_{i,\infty}}dx\\
&\quad\, + \sum_{i,j=1;i<j}^{N}(a_{ji}u_{i,\infty} + a_{ij}u_{j,\infty})\int_{\Omega}\left(\frac{u_i- u_{i,\infty}}{u_{i,\infty}} - \frac{u_j- u_{j,\infty}}{u_{j,\infty}}\right)^2dx\ge0\\
&= 2\sum_{i=1}^{N}d_i\int_{\Omega}\frac{|\nabla u_i|^2}{u_{i,\infty}}dx
 + \sum_{i,j=1;i<j}^{N}(a_{ji}u_{i,\infty} + a_{ij}u_{j,\infty})\int_{\Omega}\left(\frac{u_i}{u_{i,\infty}} - \frac{u_j}{u_{j,\infty}}\right)^2dx\\ &= \mathcal{D}(X|X_{\infty})= - \frac{d}{dt}\mathcal{E}(X|X_{\infty}) \ge 0.\\
\end{aligned}
\end{equation}

The dissipative structure of the quadratic relative entropy towards equilibrium \eqref{RelativeEntropy} is a special cases of generalised relative entropies discussed e.g. in \cite[Chapter 6]{Per07}. 
The entropy functional \eqref{RelativeEntropy_Quad}, i.e. the observation of the dissipativeness  
of the relative entropy between any two solutions, is however related to 
a general property of linear Markow processes, which was recently shown in \cite{FJ16}.

With the help of the explicit form of entropy dissipation \eqref{EntropyDissipation}, we are able to show (in Lemma \ref{EEDEstimate}
below) an entropy-entropy dissipation inequality of the form
\begin{equation}\label{EEDa}
	\mathcal{D}(X-X_{\infty}|X_{\infty}) \geq \lambda\,\mathcal{E}(X-X_{\infty}|X_{\infty}),
\end{equation}
where $\lambda>0$ is an explicitly computable constant. 
Once the EED inequality \eqref{EEDa} is proven, the statement of the first main theorem follows from a standard 
Gronwall argument, see Section \ref{weakly} below:
\begin{theorem}[Exponential equilibration of weakly reversible first order reaction networks]\hfill\label{FirstResult}\\
\textcolor{black}{
Given a weakly reversible first order reaction network partitioned into linkage classes. 
Consider (w.l.o.g.) any corresponding strongly connected subnetwork $\mathcal{N}_{\mathcal{L}}$.
Assume for $\mathcal{N}_{\mathcal{L}}$} that 
the diffusion coefficients $d_i$ are positive for all $i = 1,2,\ldots, N$, and the initial mass $M$ is positive. 

Then, the unique global solution to initial-boundary problem \eqref{VectorSystem} converges exponentially to the unique positive equilibrium $X_{\infty} = (u_{1,\infty}, u_{2,\infty}, \ldots, u_{N,\infty})$, i.e.
\begin{equation*}
\sum_{i=1}^{N}\int_{\Omega}\frac{|u_i(t) - u_{i,\infty}|^2}{u_{i,\infty}}dx \leq e^{-\lambda t}\sum_{i=1}^{N}\int_{\Omega}\frac{|u_{i,0} - u_{i,\infty}|^2}{u_{i,\infty}}dx,
\end{equation*}
where the constant $\lambda >0$ {\color{black}depends explicitly on the reaction matrix $A$, the domain $\Omega$, the diffusion matrix $D$ and the initial mass $M$.}
\end{theorem}

\begin{remark}[Lyapunov functionals for ODE systems]\hfil\label{Lj}\\
{For ODE systems, Lyapunov functionals have been mainly considered in the analysis of nonlinear ODE systems. 
Moreover, for nonlinear ODE systems, $L^1$-type Lyapunov functionals
are most commonly used in the study of the large-time-behaviour.}
For reaction-diffusion systems, however, $L^1$-functionals are not useful for the entropy method and proving explicit convergence to equilibrium, since they do not measure the spatial diffusion in an exploitable way.    

We also remark, that while logarithmic relative entropy functionals of the form 
\begin{equation}\label{Entropy-Like}
V_{X_{\infty}}(X)(t) = \sum_{i=1}^{N}\left(u_i(\ln u_i - \ln u_{i,\infty} - 1) + u_{i,\infty}\right)
\end{equation}
were known to constitute a monotone decaying Lyapunov functional for complex balanced ODE reaction networks (see e.g. \cite{HoJa72, Man,SiMa}), up to our knowledge and somewhat surprisingly,
no explicit expression of the {\it entropy dissipation} $-dV/dt$ in complex balanced systems has been derived so far. 

We also refer the reader to e.g. \cite{MiSi} for the stability of some mass action law reaction-diffusion systems, where the author used techniques of $\omega$-limit sets along with the monotonicity of $L^1$-type Lyapunov functional. 

Our results in this paper are significantly stronger in the sense that we show, by using the entropy method, the exponential convergence to equilibrium with computable rates. 

In addition and in comparison to $\omega$-limit techniques, the entropy method has also the major advantage of relying on functional inequalities rather than on specific estimates of solutions to a given system. Having such functional entropy entropy-dissipation inequalities once and for all established makes the entropy method robust with respect to model variations and generalisations. 

As example, it is the intrinsic  robustness of the entropy method, which makes it possibly to also apply to non weakly reversible reaction networks, see Theorems \ref{SourceTransmission} and \ref{Target} below.   
\end{remark}

The assumption on the positivity of all diffusion coefficients in Theorem \ref{FirstResult} is not necessary as such. As already shown in e.g.  \cite{DeFe_Con, BaFeEv14}, the combined effect of diffusion of a specie 
and its weakly reversible reaction with other (possibly non-diffusive) 
species will lead to a indirect ``diffusion-effect" on the latter specie. 
This indirect diffusion-effect can also be measured in terms of functional inequalities. Hence the exponential convergence to equilibrium still holds for systems with partial degenerate diffusion. 

\textcolor{black}{
Note that the indirect ``diffusion transfer'' and the convergence results of this paper resembles 
to some degree the framework of hypocoercivity for evolution equations like linear kinetic Fokker-Planck equations, 
see e.g. \cite{Vil09,DMS,AAS}. However, while hypocoercivity 
typically requires the use of suitably constructed Lyapunov functionals, the indirect ``diffusion-effect" can be entirely express in functional inequalities linking the relative entropy and the associated entropy dissipation functional. 
The entropy method present in this paper proves convergence to equilibrium essentially regardless of full- or degenerate diffusion matrices.}

The exponential convergence for weakly reversible systems \eqref{VectorSystem} with degenerate diffusion is stated in the 
following Theorem \ref{DegenerateDiffusion} to be proved in Section \ref{weakly} below:
\begin{theorem}[Equilibration of linear networks with degenerate diffusion]\hfill\label{DegenerateDiffusion}\\
\textcolor{black}{
Given a weakly reversible first order reaction network partitioned into linkage classes. 
Consider (w.l.o.g.) any corresponding strongly connected subnetwork $\mathcal{N}_{\mathcal{L}}$.
Assume that the initial mass $M$ is positive for $\mathcal{N}_{\mathcal{L}}$}. 
Moreover, assume that at least one diffusion coefficient $d_i$ is positive for some $i = 1, 2, \ldots, N$. 

Then, the solution to \eqref{VectorSystem} converges exponentially fast to the unique positive equilibrium $X_{\infty} = (u_{1,\infty}, u_{2,\infty}, \ldots, u_{N,\infty})$:	\begin{equation*}
		\sum_{i=1}^{N}\int_{\Omega}\frac{|u_i(t) - u_{i,\infty}|^2}{u_{i,\infty}}dx \leq e^{-\lambda' t}\sum_{i=1}^{N}\int_{\Omega}\frac{|u_{i,0} - u_{i,\infty}|^2}{u_{i,\infty}}dx
	\end{equation*}
with a computable rate $\lambda'>0$, {\color{black}which depends explicitly on $A$, $\Omega$, $D$ and $M$}.
\end{theorem}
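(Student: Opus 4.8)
The plan is to upgrade the entropy entropy-dissipation (EED) estimate behind Theorem \ref{FirstResult} to the degenerate case, the new point being that spatial oscillations of the non-diffusive species are controlled \emph{indirectly}, through their reactions with (chains of) species that do diffuse. Throughout write $\overline{f}=\int_{\Omega}f\,dx$ for the spatial average (recall $|\Omega|=1$) and split the relative entropy \eqref{RelativeEntropy} orthogonally into a spatially homogeneous part and a mean-free part,
\[
\mathcal{E}(X-X_{\infty}|X_{\infty})=\underbrace{\sum_{i=1}^{N}\frac{|\overline{u_i}-u_{i,\infty}|^2}{u_{i,\infty}}}_{=:\,\mathcal{E}_{\mathrm{hom}}}+\underbrace{\sum_{i=1}^{N}\frac{\|u_i-\overline{u_i}\|_{L^2(\Omega)}^2}{u_{i,\infty}}}_{=:\,\mathcal{E}_{\mathrm{osc}}},
\]
the cross terms vanishing since $\int_{\Omega}(u_i-\overline{u_i})\,dx=0$. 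Likewise every reaction integral in \eqref{EntropyDissipation} splits as $\int_{\Omega}(\tfrac{u_i}{u_{i,\infty}}-\tfrac{u_j}{u_{j,\infty}})^2\,dx=(\tfrac{\overline{u_i}}{u_{i,\infty}}-\tfrac{\overline{u_j}}{u_{j,\infty}})^2+\|\tfrac{u_i-\overline{u_i}}{u_{i,\infty}}-\tfrac{u_j-\overline{u_j}}{u_{j,\infty}}\|_{L^2(\Omega)}^2$. It therefore suffices to prove two inequalities $\mathcal{D}\ge\lambda_1\mathcal{E}_{\mathrm{hom}}$ and $\mathcal{D}\ge\lambda_2\mathcal{E}_{\mathrm{osc}}$ with explicit $\lambda_1,\lambda_2>0$, and then conclude $\mathcal{D}\ge\tfrac12\min\{\lambda_1,\lambda_2\}\,\mathcal{E}$.

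For the homogeneous part, discard the (oscillation-only) diffusion terms and keep the averaged reaction sum $\sum_{i<j}(a_{ji}u_{i,\infty}+a_{ij}u_{j,\infty})(\tfrac{\overline{u_i}}{u_{i,\infty}}-\tfrac{\overline{u_j}}{u_{j,\infty}})^2$. With $z_i:=\overline{u_i}/u_{i,\infty}-1$ this equals $\sum_{i<j}(a_{ji}u_{i,\infty}+a_{ij}u_{j,\infty})(z_i-z_j)^2$, i.e.\ the quadratic form of a weighted graph Laplacian in $z=(z_1,\dots,z_N)$ whose weights are strictly positive along every edge of the undirected reaction graph. Weak reversibility together with connectedness of the network makes that graph connected, so the Laplacian's kernel consists only of constant vectors $z_i\equiv c$; the conservation law forces $\sum_i u_{i,\infty}z_i=\sum_i(\overline{u_i}-u_{i,\infty})=0$, hence $c=0$ and the form is positive definite on the admissible hyperplane. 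This bounds $\sum_i z_i^2$, and thus $\mathcal{E}_{\mathrm{hom}}=\sum_i u_{i,\infty}z_i^2$, from above by a constant times the averaged reaction sum, with $\lambda_1$ the corresponding (explicitly computable) spectral gap; no diffusion is used here.

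For the oscillatory part set $\delta_i:=(u_i-\overline{u_i})/u_{i,\infty}$, so $\mathcal{E}_{\mathrm{osc}}=\sum_i u_{i,\infty}\|\delta_i\|_{L^2(\Omega)}^2$ and
\[
\mathcal{D}(X-X_{\infty}|X_{\infty})\ge 2\sum_{i:\,d_i>0}d_i\,u_{i,\infty}\int_{\Omega}|\nabla\delta_i|^2\,dx+\sum_{i<j}\bigl(a_{ji}u_{i,\infty}+a_{ij}u_{j,\infty}\bigr)\|\delta_i-\delta_j\|_{L^2(\Omega)}^2.
\]
The Poincar\'e--Wirtinger inequality $\int_{\Omega}|\nabla\delta_i|^2\,dx\ge C_P\|\delta_i\|_{L^2(\Omega)}^2$ (using $\overline{\delta_i}=0$) shows $\|\delta_i\|_{L^2(\Omega)}^2$ is controlled by $\mathcal{D}$ for every diffusive species. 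For a non-diffusive species $j$, pick (by hypothesis) some $i$ with $d_i>0$ and, by connectedness of the undirected reaction graph, a path $i=k_0,k_1,\dots,k_m=j$ along edges with positive weights; the $L^2$-triangle inequality and Cauchy--Schwarz give $\|\delta_j\|_{L^2(\Omega)}^2\le (m+1)\bigl(\|\delta_{k_0}\|_{L^2(\Omega)}^2+\sum_{\ell=0}^{m-1}\|\delta_{k_\ell}-\delta_{k_{\ell+1}}\|_{L^2(\Omega)}^2\bigr)$, and every term on the right is already dominated by $\mathcal{D}$. Summing over $j$ yields $\mathcal{E}_{\mathrm{osc}}\le \lambda_2^{-1}\,\mathcal{D}$ with a constant tracked explicitly in terms of $C_P$, the rate constants, $X_{\infty}$, and the diameter of the reaction graph.

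With the EED inequality $\mathcal{D}(X-X_{\infty}|X_{\infty})\ge\lambda'\,\mathcal{E}(X-X_{\infty}|X_{\infty})$, $\lambda'>0$ explicit, in hand, the proof concludes exactly as for Theorem \ref{FirstResult}: global existence, nonnegativity and the dissipation identity $\tfrac{d}{dt}\mathcal{E}=-\mathcal{D}$ for \eqref{VectorSystem} are standard for linear parabolic systems (after the usual regularisation to justify differentiating the entropy for $t>0$ and passing to the limit $t\to 0^{+}$), and Gronwall's lemma gives $\mathcal{E}(X(t)-X_{\infty}|X_{\infty})\le e^{-\lambda' t}\,\mathcal{E}(X_0-X_{\infty}|X_{\infty})$. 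I expect the main obstacle to be the oscillatory estimate: making the chain argument quantitative so that $\lambda'$ is genuinely computable, and carefully tracking the accumulated loss (path lengths, number of species) along the paths reaching the non-diffusive species. The homogeneous estimate, by contrast, is a routine finite-dimensional spectral-gap computation once the graph-Laplacian structure is recognised.
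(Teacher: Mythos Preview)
Your proof is correct and follows the same overall strategy as the paper: establish an entropy entropy-dissipation inequality by splitting the relative entropy into a spatially homogeneous part and an oscillatory part, handle the former via the reaction graph-Laplacian together with mass conservation, and control the latter by transferring the Poincar\'e control from the diffusive species to the others along reaction chains; then conclude by Gronwall.

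The one noteworthy difference lies in the oscillatory estimate. The paper (Theorem~\ref{ConvDegDiffusion}) does \emph{not} first split the reaction integrals orthogonally; instead it keeps the full reaction term $\int_\Omega(\tfrac{w_1}{u_{1,\infty}}-\tfrac{w_i}{u_{i,\infty}})^2\,dx$ and combines it with $\int_\Omega|\nabla w_1|^2/u_{1,\infty}\,dx$ in a single mixed inequality \eqref{e18}--\eqref{e19}: Poincar\'e replaces $w_1$ by $\overline{w_1}$ at the cost of an absorbable error, and the remaining integrand $(\tfrac{\overline{w_1}}{u_{1,\infty}}-\tfrac{w_i}{u_{i,\infty}})^2$ then splits orthogonally to produce $\|w_i-\overline{w_i}\|^2$. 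Your route---pre-splitting the reaction dissipation so that the oscillation part reads $\|\delta_i-\delta_j\|_{L^2}^2$ directly, controlling $\|\delta_i\|_{L^2}$ for the diffusive index by Poincar\'e, and then chaining via the triangle inequality---is arguably cleaner and makes the dependence of $\lambda'$ on the graph diameter more transparent. The paper's version, on the other hand, avoids tracking the path explicitly by first invoking the weak-reversibility estimate \eqref{e11} to make \emph{all} pairs $(1,i)$ appear, so only one hop from the diffusive species is ever needed. Both arguments yield explicit constants.
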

\begin{remark}[Same results of linear ODE reaction networks]\hfil\\
We remark that our approach can of course be adapted to equally apply  to linear ODE reaction networks by eliminating the terms and calculations  concerning spatial diffusion. Thus, all the results of this paper
hold equally for such linear ODE systems.  
\end{remark}
\medskip

As the second main result of this manuscript, we shall derive an entropy approach and prove convergence to equilibrium for reaction networks as in Figure \ref{Reaction}, for which {\it the weak reversibility assumption does not hold}. 
For first order reaction networks, this implies that the system is not complex balanced, or in other words, that equilibria are not necessarily positive.

Due to the lack of positivity of equilibria, it follows immediately that the relative entropy used for weakly reversible systems is not directly applicable. In the following we proposed a modified entropy approach. At first, it is necessary to understand the structure of non weakly reversible reaction networks.

We state here the necessary terminology and the main ideas.
Since for any non weakly reversible linkage class, the associated directed graph $G$ is connected (which means that the underlying undirected version of $G$ is a connected graph) but not strongly connected, $G$ consists of $r\geq 2$ strongly connected components, which we denote by $C_1, C_2, \ldots, C_r$. Then, we can construct a {\it directed acyclic graph} $G^C$, i.e. $G^C$ is a directed graph with no directed cycles as follows:
\begin{itemize}
	\item[a)] $G^C$ has as nodes the $r$ strongly connected components $C_1,C_2, \ldots, C_r$,
	\item[b)] for two nodes $C_i$ and $C_j$ of $G^C$, if there exists a reaction $C_i\ni S_{k} \xrightarrow{a_{\ell k}}S_{\ell} \in C_j$ with $a_{\ell k}>0$, then there exists also the edge $C_i \rightarrow C_j$ on $G^C$.
\end{itemize}
Due to the structure of $G^C$, its nodes, or equivalently the strongly connected components of $G$, can be labeled as one of the following three types:
\begin{itemize}
	\item A strongly connected component $C_i$ is called a {\it source component} if there is no in-flow to $C_i$, i.e. there does not exist an edge $S_k\rightarrow S_j$ where $S_k\not\in C_i$ and $S_j\in C_i$.
	\item A strongly connected component $C_i$ is called a {\it target component} if there is no out-flow from $C_i$, i.e. there does not exist and edge $S_k \rightarrow S_j$ where $S_k\in C_i$ and $S_j\not\in C_i$.
	\item If $C_i$ is neither a source component nor a target component, then we call $C_i$ a {\it transmission component}.
\end{itemize}

\begin{example}
			Consider the reaction network in Figure \ref{NonReversibleReaction}.
			\begin{figure}[htp]
			\begin{center}\scalebox{1}[1]{
			\begin{tikzpicture}
			\node (a) {$S_{1}$} node (b) at (0,2) {$S_{2}$} node (c) at (2,2) {$S_3$}  node (d) at (2,0) {$S_4$}  node (e) at (4,0) {$S_5$}   node(f) at(4,2){$S_6$};
			\draw[arrows=->] ([xshift=-0.5mm, yshift=0.5mm]a.north) -- node [left] {\scalebox{.8}[.8]{$a_{21}$}} ([xshift=-0.5mm, yshift=-0.5mm]b.south);
			\draw[arrows=->] ([xshift =0.5mm,yshift=-0.5mm]b.south) -- node [right] {\scalebox{.8}[.8]{$a_{12}$}} ([xshift=0.5mm,yshift=0.5mm]a.north);
			\draw[arrows=->] ([xshift =0.5mm]b.east) -- node [above] {\scalebox{.8}[.8]{$a_{32}$}} ([xshift=-0.5mm]c.west);
			\draw[arrows=->] ([xshift =0.5mm,yshift=1mm]a) -- node [right] {\scalebox{.8}[.8]{$a_{31}$}} ([xshift=-0.5mm,yshift=-1mm]c);
			\draw[arrows=->] ([xshift =0.5mm]a) -- node [below] {\scalebox{.8}[.8]{$a_{41}$}} ([xshift=-0.5mm]d);
			\draw[arrows=->] ([xshift =0.5mm,yshift=-0.5mm]d.east) -- node [below] {\scalebox{.8}[.8]{$a_{54}$}} ([xshift=-0.5mm,yshift=-0.5mm]e.west);
			\draw[arrows=->] ([xshift =-0.5mm,yshift=.5mm]e.west) -- node [above] {\scalebox{.8}[.8]{$a_{45}$}} ([xshift=0.5mm,yshift=.5mm]d.east);
			\draw[arrows=->] ([xshift =0.5mm]c) -- node [above] {\scalebox{.8}[.8]{$a_{63}$}} ([xshift=-0.5mm]f);
			\draw[arrows=->] ([yshift =-0.5mm]c) -- node [right] {\scalebox{.8}[.8]{$a_{43}$}} ([yshift=0.5mm]d);
			\end{tikzpicture}} \caption{A non-weakly reversible reaction network consisting of four strongly connected components}\label{NonReversibleReaction} 
			\end{center}
			\end{figure}
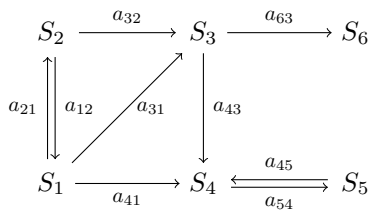
The depicted network has $4$ strongly connected components $C_1 = \{S_1, S_2\},\, C_2 = \{S_3\},\, C_3 = \{S_4, S_5\},\, C_4 = \{S_6\}$, where $C_1$ is a source component, $C_2$ is a transmission component and $C_3, C_4$ are target components.
\end{example}

By definition, each of the three types of strongly connected components 
is subject to a different dynamic, which can be written as follows: 
Let $C_i$ be a strongly connected component and denote by $X_i$
the concentrations within $C_i$. Moreover, denote by $A_i$ the reaction matrix formed by all reactions within the component $C_i$.
Then, we have
\begin{itemize}
\item for a source component $C_i$:
\begin{equation*}
\partial_t{X}_{i} - D_{i}\Delta X_{i} = A_iX_{i} + \mathcal{F}_i^{out},
\end{equation*}
where $\mathcal{F}_i^{out}$ summarises the out-flow from the source component $C_i$. 
\item for a target component $C_i$:
\begin{equation*}
\partial_t{X}_{i} - D_{i}\Delta X_{i} = \mathcal{F}_i^{in} + A_iX_{i},
\end{equation*}
where $\mathcal{F}_i^{in}$ summarises the in-flow into the target component $C_i$. 
\item for a transmission component $C_i$:
		\begin{equation*}
			\partial_t{X}_{i} - D_{i}\Delta X_{i} = \mathcal{F}_i^{in} + A_iX_{i} + \mathcal{F}_i^{out},
		\end{equation*}
where $\mathcal{F}_i^{in}$, $\mathcal{F}_i^{out}$ are the 
in/out-flow of the transmission component $C_i$.		
\end{itemize}

In the dynamics of transmission and target components, the in-flow $\mathcal{F}_i^{in}$ depends only on species which do not belong to $C_i$, so that $\mathcal{F}_i^{in}$ can be treated as an external source for the system for $C_i$. However, it may happen that $\mathcal{F}_i^{in}$ contains inflow from species whose behaviour is not a-priori known. 

For acyclic graphs $G^C$, however, it is possible to avoid these difficulties, 
since the  {\it topological order} of acyclic graphs allows to re-order the $r$ strongly connected components $C_1,C_2,\ldots, C_r$ in such a way  that for every edge $C_i \rightarrow C_j$ of $G^C$ it holds that $i<j$.
This permits to study the dynamics of all components $C_i$ sequentially 
according to the topological order and, when at times considering a transmission component (or later a target component) $C_i$, the required in-flow $\mathcal{F}_i^{in}$ contains only species whose behaviour 
is already known.
\medskip

Due to the structure of the network, it is expected that species belonging to source or transmission components are subsequently losing mass such that
the concentrations decay to zero in the large-time behaviour as time goes to infinity. In contrast, the species belonging to a target component 
converge to an equilibrium state, which is determined by the reactions within this component and by the mass "injected" from other components. 

Since the source and transmission components do not converge to positive equilibria, the relative entropy method used for weakly reversible networks 
is directly not applicable. Instead, for each component $C_i$ we will modify the entropy method by introducing an \emph{artificial equilibrium state with normalised mass}, which balances the reaction within $C_i$. The artificial equilibrium will allow us to consider a quadratic functional, which is similar to the relative entropy in weakly reversible networks and  which can be proved to decay exponentially to zero. This result is stated in the following Theorem:
\begin{theorem}[Exponential decay to zero of source and transmission components]\hfill\label{SourceTransmission}\\
\textcolor{black}{Given an arbitrary first order reaction network partitioned into linkage classes and consider (w.l.o.g.) any corresponding connected subnetwork $\mathcal{N}_{\mathcal{L}}$. 
Assume for $\mathcal{N}_{\mathcal{L}}$ that all diffusion coefficients $d_i$ are positive.} 

Then, for each $C_i$ being a source or a transmission component of $\mathcal{N}_{\mathcal{L}}$, there exist constants $K_i>0$ and $\lambda_i>0$ {\color{black}depending explicitly on $A_i$ and $\Omega$} such that, for any specie $S_{\ell} \in C_i$, the concentration $u_{\ell}$ of $S_{\ell}$ decays  exponentially to zero, i.e.
\begin{equation*}
\|u_{\ell}(t,\cdot)\|_{L^2(\Omega)}^2 \leq K_ie^{-\lambda_i t}, \qquad \text{ for all }\ t>0.
\end{equation*}
\end{theorem}


For a target component $C_i$, due to the in-flow $\mathcal{F}_i^{in}$, the total mass of $C_i$ is not conserved but increasing. Hence, $C_i$ does not possesses an equilibrium as weakly reversible networks, which is  explicitly  
given in terms of the reaction rates and the conserved initial total mass. 

However, since each target component is strongly connected and thus a weakly reversible reaction network with mass influx, there still exists a unique, 
positive equilibrium of $C_i$ denoted by $X_{i,\infty}$, which balances the reactions within $C_i$ and has a total mass, which is the sum of the total initial mass of $C_i$ and the total "injected mass" from the other components via the in-flow $\mathcal{F}_i^{in}$ (see Lemma \ref{FinalStateTarget}). 
We emphasis that in general the injected mass is not given explicitly but depends on the time evolution of all the influencing species higher up with respect to the topological order of the graph $G^C$.

Since the equilibrium $X_{i,\infty}$ is positive, we can use again a relative entropy functional to prove the convergence of the species belonging to a target component to their corresponding equilibrium states.
\begin{theorem}[Exponential convergence for target components]\hfill\label{Target}\\
\textcolor{black}{Given an arbitrary first order reaction network partitioned into linkage classes and consider (w.l.o.g.) any corresponding connected subnetwork $\mathcal{N}_{\mathcal{L}}$.}
Assume for $\mathcal{N}_{\mathcal{L}}$ that all diffusion coefficients $d_i$ are positive.

Then, for all target components $C_i = \{S_{i_1}, S_{i_2}, \ldots, S_{i_{N_i}}\}$ of $\mathcal{N}_{\mathcal{L}}$, where $N_i$ is the number of species belonging to $C_i$, there exists a unique positive equilibrium state $X_{i,\infty} = (u_{i_1,\infty}, \ldots, u_{i_{N_i},\infty})$ and 
the concentrations $u_{i_\ell}$ of $S_{i_\ell}$ converges exponentially to the corresponding equilibrium value
\begin{equation*}
\|u_{i_\ell}(t) - u_{i_\ell,\infty}\|_{L^2(\Omega)}^2 \leq K_ie^{-\lambda_i t}, \qquad \text{ for all } t>0,
\end{equation*}
{\color{black}with the constants $K_i>0$ and $\lambda_i >0$ depending explicitly 
on $A_i$, $\Omega$ and $D_i$ and on the equilibrium state $X_{i,\infty}$. }
\end{theorem}
\textcolor{black}{
\begin{remark}
Note that by Lemma \ref{FinalStateTarget}, the equilibrium state $X_{i,\infty}$ depends explicitly on the mass injected into the target component $C_i$, 
but that injected mass itself depends non-explicitly on the initial data and on the history of the reaction-diffusion network.
\end{remark}}
\begin{remark}
We remark that in the same way as Theorem \ref{DegenerateDiffusion} generalises Theorem \ref{FirstResult} to allow for 
degenerate diffusion matrices, it is equally possible to generalise Theorems \ref{SourceTransmission} and \ref{Target}
in the sense that it is sufficient to assume that for each target component there is at least one diffusion coefficient is positive. 
In particular, the proof of Theorem \ref{SourceTransmission} holds independently from the entries of a non-negative diffusion matrices $D_i$.
\end{remark}

\medskip
\noindent\underline{Outline:}
The rest of the paper is organised as follows: In Section 2, we present the entropy method for weakly reversible networks and prove exponential convergence to the positive equilibrium. Non weakly reversible networks will be investigated in the Section 3. By using the structure of the underlying graphs, we are able completely resolve the large-time behaviour of all species belonging to such first order networks.

{We also remark that all constants in this manuscript are explicit in the sense that they are derived in constructive ways. However, since these constants are not optimal, we will denote them by using generic letters like $K_i$ or $\lambda_i$, etc.
The issue of optimal rates and constants for the convergence is subtle, and can be investigated in future works.}
\medskip

\noindent\underline{Notation:} We shall 
use the shortcut $\overline{f}=\iO f(x) \,dx$, 
whenever $|\Omega|=1$, and $\|\cdot\|$ for the usual norm in $L^2(\Omega)$, i.e.
\begin{equation*}
			\|f\|^2 = \int_{\Omega}|f(x)|^2dx.
\end{equation*}

\section{Strongly connected first order networks}\label{weakly}
\textcolor{black}{
In this section, we consider strongly connected first order reaction networks $\mathcal{N}$, for which the  
associated directed graph 
is strongly connected.
This is w.l.o.g. by Remarks \ref{Partitions} and \ref{StronglyConnected}, since any weakly reversible first order reaction network 
can be partitioned into disjoint strongly connected components/subnetworks, which can be treated independently. 
}

Moreover, we recall that $\Omega\subset\mathbb{R}^n$ is a bounded domain with smooth boundary $\partial\Omega$ (say $\partial\Omega\in C^{2+\alpha}$) and normalised volume $|\Omega| = 1$ (w.l.o.g. by rescaling).
Finally, we recall the system \eqref{VectorSystem}
\begin{equation}\label{VectorSystem_ReCall}
	\begin{cases}
		\partial_tX - D\Delta X = AX, &\quad x\in\Omega, \quad t>0,\\
		\partial_{\nu}X = 0, &\quad x\in\partial\Omega, \quad t>0,\\
		X(x,0) = X_0(x), &\quad x\in\Omega,
	\end{cases}
\end{equation}
where $X = [u_1, u_2, \ldots, u_N]^T$ denotes the vector of concentrations, the vector $X_0 = [u_{1,0}, u_{2,0}, \ldots, u_{N,0}]^T$ denotes the initial data, the diffusion matrix $D = \text{diag}(d_1,d_2,\ldots, d_N)$ and the reaction matrix $A = (a_{ij}) \in \mathbb{R}^{N\times N}$ satisfies
\begin{equation}\label{a_jj_Recall}
\begin{cases}
	a_{ij} \geq 0, &\qquad \text{for all } i\not=j, \quad i,j =1,2,\ldots,N,\\ 
	a_{jj} = -\sum_{i=1,i\not= j}^{N}a_{ij}, &\qquad \text{for all } j =1,2,\ldots,N.
\end{cases}
\end{equation}
Moreover, since $\mathcal{N}_{\mathcal{L}}$ is strongly connected, we know that the reaction matrix $A$ is irreducible, see Lemma \ref{Characteristic}.
For the linear system \eqref{VectorSystem_ReCall}, the existence of a global unique solution 
follows by standard arguments, see e.g. \cite{Smo,Rot}:
\begin{theorem}[Global well-posedness of linear reaction-diffusion networks]\hfil\\
For all given initial data $X_0 \in (L^2(\Omega))^N$, there exists a unique  solution $X\in C([0,T];(L^2(\Omega))^N)\cap L^2(0,T;(H^1(\Omega))^N)$ for all $T>0$. Moreover, if $X_0 \geq 0$ then $X(t) \geq 0$ for all $t>0$. Finally, the solutions to \eqref{VectorSystem_ReCall} conserve the total mass \eqref{MassConservation} for all $t>0$:
\begin{equation}\label{MassConservation_ReCall}
	\sum_{i=1}^{N}\int_{\Omega}u_i(x,t)dx = \sum_{i=1}^{N}\int_{\Omega}u_{i,0}(x)dx =: M >0,
\end{equation}
where the initial mass $M$ is assumed positive.
\end{theorem}

\medskip


Lemma \ref{Characteristic} stated the equivalence to weak reversibility 
first order reaction networks and irreducibility of the reaction matrices $A$, which follows e.g. from \cite[Definition 2.1, page 46]{Seneta} and \cite[Theorem 3.2, page 78]{Min88}.
Moreover, Lemma \ref{Characteristic} stated the existence of a unique positive complex balance equilibrium to \eqref{VectorSystem_ReCall} 
for any given positive initial mass $M>0$.   
Concerning the proof of this part of Lemma \ref{Characteristic}, it remains to show the following

\begin{lemma}[Unique positive equilibria for \textcolor{black}{strongly connected} networks with fixed mass $M$]\label{UniqueEqui}\hfil\\
The {\color{black} first order} reaction network $\mathcal{N}$ is \textcolor{black}{strongly connected} if and only if 
the system \eqref{VectorSystem_ReCall} admits a unique positive equilibrium for any fixed positive mass $M>0$.
\end{lemma}
\begin{proof}
{\it Sufficiency:} 
Assume that $\mathcal{N}$ is \textcolor{black}{strongly connected}. Thanks to the first equivalency in Lemma \ref{Characteristic}, the reaction matrix $A$ is irreducible. Moreover, for large enough $\alpha>0$, we have that $A+ \alpha E$ is nonnegative in the sense that all of its elements are nonnegative. We can then apply an extension of the Perron-Frobenius theorem, see e.g. \cite[Theorem 2.6, page 46]{Seneta} or \cite[Chapter 6.3.1]{Per07}, to obtain the existence of a unique positive equilibrium, i.e. a positive right zero-eigenvector $X_{\infty} = (u_{1,\infty}, u_{2,\infty}, \ldots, u_{N,\infty})>0$ satisfying $AX_{\infty} = 0$ such that $\sum_{i=1}^{N}u_{i,\infty} = M>0$.
	
	{\it Necessity:} {{Now assume that \eqref{VectorSystem_ReCall} has a unique positive equilibrium $X_{\infty}$. Since $AX_{\infty} = 0$ and $X_{\infty}$ is uniquely determined by the mass conservation, we obtain that $\dim(\ker A) = 1$.
		
	By using a contradiction argument, we assume that $\mathcal{N}$ is not \textcolor{black}{strongly connected}, 
	then the reaction matrix $A$ is reducible, i.e.
	\begin{equation*}
		A = P^T\begin{pmatrix}
			B&0\\ C&D
		\end{pmatrix}P
	\end{equation*}
	for some permutation matrix $P$, in which $D$ is irreducible. Choose $d$ to be an eigenvector of $D$ corresponding to zero eigenvalue. Then
	\begin{equation*}
		AP^T\begin{pmatrix}
		0\\d
		\end{pmatrix} = P^T\begin{pmatrix}B&0\\C&D\end{pmatrix}\begin{pmatrix}
				0\\d
				\end{pmatrix} = P^T\begin{pmatrix}0\\Dd\end{pmatrix} = \begin{pmatrix}
				0\\0
				\end{pmatrix}
	\end{equation*}
	which means that $P^T\begin{pmatrix}
	0\\d
	\end{pmatrix}$ is an eigenvector of $A$ corresponding to zero eigenvalue. Since $X_{\infty}$ is strictly positive, $P^T\begin{pmatrix}
		0\\d
		\end{pmatrix}$ and $X_{\infty}$ are linear independent, which leads to a contradiction with $\mathrm{dim}(\mathrm{ker} A) = 1$.}}
%
\end{proof}

In the following, we will use the entropy method to study the trend to equilibrium. More precisely, for two trajectories $X = (u_1, u_2, \ldots, u_N)$ and $Y = (v_1, v_2, \ldots, v_N)$ to \eqref{VectorSystem_ReCall}, where $Y(t)$ has non-zero components for all times $t>0$, we consider the following quadratic relative entropy functional
\begin{equation}\label{RelativeEntropy_Recall}
	\mathcal{E}(X|Y)(t) = \sum_{i=1}^{N}\int_{\Omega}\frac{|u_i|^2}{v_i}dx.
\end{equation}

The following key Lemma \ref{ExplicitEnDiss} provides an 
explicit expression of the entropy dissipation associated to \eqref{RelativeEntropy_Recall}: 
\begin{lemma}[Relative entropy dissipation functional]\label{ExplicitEnDiss}\hfil\\
Assume that $v_i(t) \not= 0$ for all $i = 1,2,\ldots, N$ and $t>0$. Then,  we have
\begin{align*}
\mathcal{D}(X|Y) = -\frac{d}{dt}\mathcal{E}(X|Y)= 2\sum_{i=1}^{N}d_i\int_{\Omega}v_i\left|\nabla\Bigl(\frac{u_i}{v_i}\Bigr)\right|^2dx+ \!\!\sum_{i,j=1; i<j}^{N}\int_{\Omega}(a_{ij}v_j + a_{ji}v_i)\biggl(\frac{u_i}{v_i} - \frac{u_j}{v_j}\biggr)^{\!2}dx.
\end{align*}
\end{lemma}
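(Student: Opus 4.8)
The plan is a direct differentiation-and-reorganisation argument. Since the system \eqref{VectorSystem_ReCall} is linear and parabolic with smooth boundary, both trajectories are smooth on $\Omega\times(0,\infty)$ up to the boundary, and the hypothesis $v_i(t,\cdot)\ne 0$ (together with continuity and compactness) gives a positive lower bound for each $v_i$ on every time interval $[\tau,T]\subset(0,\infty)$; this justifies differentiating \eqref{RelativeEntropy_Recall} under the integral sign to obtain
\[
\frac{d}{dt}\mathcal{E}(X|Y) = \sum_{i=1}^{N}\int_{\Omega}\Bigl(\frac{2u_i}{v_i}\,\partial_t u_i - \frac{u_i^2}{v_i^2}\,\partial_t v_i\Bigr)dx .
\]
I then substitute $\partial_t u_i = d_i\Delta u_i + \sum_{j}a_{ij}u_j$ and $\partial_t v_i = d_i\Delta v_i + \sum_{j}a_{ij}v_j$ and split the result into a diffusion part (the $d_i$-terms) and a reaction part (the $a_{ij}$-terms), treating the two separately.

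For the diffusion part I integrate by parts in each term; the boundary integrals vanish thanks to the homogeneous Neumann conditions $\partial_\nu u_i = \partial_\nu v_i = 0$. Expanding $\nabla(2u_i/v_i)$ and $\nabla(u_i^2/v_i^2)$ and collecting the resulting gradient terms yields, for each $i$, the integrand $-\tfrac{2}{v_i}\bigl|\nabla u_i - \tfrac{u_i}{v_i}\nabla v_i\bigr|^2$, which equals $-2\,v_i\bigl|\nabla(u_i/v_i)\bigr|^2$ because $\nabla(u_i/v_i)=\tfrac{1}{v_i}\bigl(\nabla u_i - \tfrac{u_i}{v_i}\nabla v_i\bigr)$. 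Summing over $i$ with weights $d_i$ reproduces exactly the first term of $\mathcal{D}(X|Y)$ with the correct sign.

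For the reaction part I substitute $w_i:=u_i/v_i$, so that $u_i=w_i v_i$ and the pointwise contribution reduces to $\sum_{i,j}a_{ij}v_j\,(2w_iw_j-w_i^2)$. Separating the diagonal ($i=j$) terms and using the column-sum identity $a_{jj}=-\sum_{i\ne j}a_{ij}$ from \eqref{a_jj_Recall} to rewrite $\sum_j a_{jj}v_jw_j^2 = -\sum_{i\ne j}a_{ij}v_jw_j^2$, the whole expression collapses to $\sum_{i\ne j}a_{ij}v_j\,(2w_iw_j-w_i^2-w_j^2)=-\sum_{i\ne j}a_{ij}v_j\,(w_i-w_j)^2$; pairing the ordered indices $(i,j)$ and $(j,i)$ and using $(w_i-w_j)^2=(w_j-w_i)^2$ turns this into $-\sum_{i<j}(a_{ij}v_j+a_{ji}v_i)(w_i-w_j)^2$, i.e. the negative of the second term of $\mathcal{D}(X|Y)$. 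Combining the diffusion and reaction parts and recalling $\mathcal{D}(X|Y)=-\tfrac{d}{dt}\mathcal{E}(X|Y)$ gives the claimed formula.

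The algebraic manipulations are elementary once the substitution $w_i=u_i/v_i$ is in place; the genuinely delicate points are (i) the regularity and strict positivity of $v_i$ needed to differentiate under the integral and to integrate by parts --- supplied by parabolic smoothing together with the standing assumption $v_i\ne 0$ --- and (ii) the bookkeeping of signs in the diffusion ``completion of the square'' and, above all, in the diagonal-absorption and symmetrisation of the reaction term, which is where I expect the main risk of error to lie.
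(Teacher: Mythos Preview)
Your proof is correct and follows essentially the same route as the paper: differentiate under the integral, split into diffusion and reaction parts, integrate by parts for the diffusion term, and use the column-sum identity $a_{jj}=-\sum_{i\ne j}a_{ij}$ together with symmetrisation in $(i,j)$ for the reaction term. The only cosmetic difference is that you introduce the substitution $w_i=u_i/v_i$ at the outset of the reaction computation, which makes the diagonal-absorption step $\sum_{i,j}a_{ij}v_j(2w_iw_j-w_i^2)\to -\sum_{i\ne j}a_{ij}v_j(w_i-w_j)^2$ slightly more transparent than the paper's equivalent expansion of $J_R^{(i)}$.
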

\begin{proof}
For convenience we recall that
$$
\partial_tu_i - d_i\Delta{u_i} = \sum_{j=1}^{N}a_{ij}u_j \qquad \text{ and  } \qquad \partial_tv_i - d_i\Delta{v_i} = \sum_{j=1}^{N}a_{ij}v_j,
$$
for all $i=1,\ldots, N$. 
Hence, we compute
\begin{align}
\frac{d}{dt}\mathcal{E}(X|Y) 
&= \sum_{i=1}^{N}\int_{\Omega}\left[2\frac{u_i}{v_i}\partial_tu_i - \frac{u_i^2}{v_i^2}\partial_tv_i\right]dx\nonumber \\
&= \sum_{i=1}^{N}\int_{\Omega}\left[2\frac{u_i}{v_i}\biggl(d_i\Delta{u_i} + \sum_{j=1}^{N}a_{ij}u_j\biggr) - \frac{u_i^2}{v_i^2}\biggl(d_i\Delta{v_i} + \sum_{j=1}^{N}a_{ij}v_j\biggr)\right]dx\nonumber \\
&= \sum_{i=1}^{N}\int_{\Omega}\left(2d_i\frac{u_i}{v_i}\Delta{u_i} - d_i\frac{u_i^2}{v_i^2}\Delta{v_i}\right)dx + \sum_{i=1}^{N}\int_{\Omega}\biggl(2\frac{u_i}{v_i}\sum_{j=1}^{N}a_{ij}u_j - \frac{u_i^2}{v_i^2}\sum_{j=1}^{N}a_{ij}v_j\biggr)dx\nonumber \\
			&=: \sum_{i=1}^{N}\int_{\Omega}J^{(i)}_Ddx + \int_{\Omega}\sum_{i=1}^{N}J^{(i)}_Rdx\nonumber \\
			&=: \mathcal{I}_D + \mathcal{I}_R.\label{e1}
\end{align}
	Using integration by parts, we have
\begin{align}
\int_{\Omega}J^{(i)}_{D}dx &= \int_{\Omega}\left(2d_i\frac{u_i}{v_i}\Delta{u_i} - d_i\frac{u_i^2}{v_i^2}\Delta{v_i}\right)dx\nonumber \\
&= -2d_i\int_{\Omega}\left(\nabla\Bigl(\frac{u_i}{v_i}\Bigr)\nabla u_i - \frac{u_i}{v_i}\nabla\Bigl(\frac{u_i}{v_i}\Bigr)\nabla v_i\right)dx\nonumber \\
&= -2d_i\int_{\Omega}v_i\left|\nabla\Bigl(\frac{u_i}{v_i}\Bigr)\right|^2dx.
	 \label{e2} 
\end{align}
Thus,
\begin{equation}
 \label{e3} 
	\mathcal{I}_D = -2\sum_{i=1}^{N}d_i\int_{\Omega}v_i\left|\nabla\Bigl(\frac{u_i}{v_i}\Bigr)\right|^2dx.
\end{equation} 
For the reaction terms $\mathcal{I}_R$, we use
$
	a_{ii} = -\sum_{j=1,j\not=i}^{N}a_{ji}
$
to calculate
\begin{align}
J^{(i)}_R &= 2\frac{u_i}{v_i}\sum_{j=1}^{N}a_{ij}u_j - \frac{u_i^2}{v_i^2}\sum_{j=1}^{N}a_{ij}v_j\nonumber \\
&= 2\frac{u_i}{v_i}\biggl(\,\sum_{j=1,j\not=i}^{N}a_{ij}u_j + a_{ii}u_i\biggr) - \frac{u_i^2}{v_i^2}\biggl(\,\sum_{j=1,j\not=i}^{N}a_{ij}v_j + a_{ii}v_i\biggr)\nonumber \\
&= 2\frac{u_i}{v_i}\biggl(\,\sum_{j=1,j\not=i}^{N}a_{ij}u_j - u_i\sum_{j=1,j\not=i}^{N}a_{ji}\biggr) - \frac{u_i^2}{v_i^2}\biggl(\,\sum_{j=1,j\not=i}^{N}a_{ij}v_j - v_i\sum_{j=1,j\not=i}^{N}a_{ji}\biggr)\nonumber \\
&=\sum_{j=1,j\not=i}^{N}\left(2\frac{u_i}{v_i}(a_{ij}u_j - a_{ji}u_i) - \frac{u_i^2}{v_i^2}(a_{ij}v_j - a_{ji}v_i)\right).	 
\label{e4} 
\end{align}
Therefore,
\begin{align}
\mathcal{I}_R = \sum_{i=1}^{N}\int_{\Omega}J^{(i)}_Rdx&= \int_{\Omega}\sum_{i=1}^{N}\sum_{j=1,j\not=i}^{N}\left(2\frac{u_i}{v_i}(a_{ij}u_j - a_{ji}u_i) - \frac{u_i^2}{v_i^2}(a_{ij}v_j - a_{ji}v_i)\right)dx\\
&= \sum_{i,j=1;i<j}^{N}\int_{\Omega}\biggl[2\frac{u_i}{v_i}(a_{ij}u_j - a_{ji}u_i) - \frac{u_i^2}{v_i^2}(a_{ij}v_j - a_{ji}v_i)\nonumber \\
&\qquad\qquad\qquad+ 2\frac{u_j}{v_j}(a_{ji}u_i - a_{ij}u_j) - \frac{u_j^2}{v_j^2}(a_{ji}v_i - a_{ij}v_j)\biggr]dx\nonumber\\
&= \sum_{i,j=1;i<j}^{N}\int_{\Omega}\biggl[2(a_{ij}u_j - a_{ji}u_i)\left(\frac{u_i}{v_i}-\frac{u_j}{v_j}\right)- (a_{ij}v_j - a_{ji}v_i)\left(\frac{u_i^2}{v_i^2}-\frac{u_j^2}{v_j^2}\right)\biggr]dx\nonumber \\
&= \sum_{i,j=1;i<j}^{N}\int_{\Omega}\left(\frac{u_i}{v_i}-\frac{u_j}{v_j}\right)\left[2(a_{ij}u_j-a_{ji}u_i) -  (a_{ij}v_j - a_{ji}v_i)\left(\frac{u_i}{v_i}+\frac{u_j}{v_j}\right)\right]dx\nonumber \\
&= -\sum_{i,j=1;i<j}^{N}\int_{\Omega}(a_{ij}v_j + a_{ji}v_i)\left(\frac{u_i}{v_i}-\frac{u_j}{v_j}\right)^2dx.
 \label{e5} 
\end{align}
By combining \eqref{e1}, \eqref{e3} and \eqref{e5}, we obtain the result stated in the Lemma.
\end{proof}


In order to simplify the following calculations, we introduce the difference to the equilibrium  
$$
W:= (w_1,w_2,\ldots, w_N) = (u_1-u_{1,\infty}, u_{2} - u_{2,\infty}, \ldots, u_N - u_{N,\infty}) = X - X_{\infty},
$$
and remark that thanks to the linearity of the system, the difference $W$ is the
solution to \eqref{VectorSystem_ReCall} subject to the shifted initial data
$$
W(x,0) = X(x,0) - X_{\infty},\qquad \text{ for all } x\in\Omega.
$$
Note that the total initial mass corresponding to $W$ is zero, i.e. 
$$
M_{W} := \sum_{i=1}^{N}\iO{w_{i,0}}\,dx = \sum_{i=1}^{N}\int_{\Omega}(u_{i,0}(x) - u_{i,\infty})\,dx = 0,
$$
and that $W$ conserves the zero mass 
\begin{equation*}
	\sum_{i=1}^{N}\iO{w_i}(t,x)\,dx = 0, \qquad \text{ for all } t>0.
\end{equation*}
By using the relative entropy dissipation functional derived in  Lemma \ref{ExplicitEnDiss}, we have
\begin{equation*}
\mathcal{D}(W|X_{\infty}) = 2\sum_{i=1}^{N}\int_{\Omega}d_i\,\frac{|\nabla w_i|^2}{u_{i,\infty}}\,dx
+ \sum_{i,j=1;i<j}^{N}(a_{ij}u_{j,\infty} + a_{ji}u_{i,\infty})\int_{\Omega}\Bigl( \frac{w_i}{u_{i,\infty}} - \frac{w_j}{u_{j,\infty}}\Bigr)^{\!2}dx.
\end{equation*}

The following Lemma about entropy-entropy dissipation estimate is the main key to prove the convergence to equilibrium for \eqref{VectorSystem_ReCall}.
\begin{lemma}[Entropy-Entropy Dissipation Estimate]\label{EEDEstimate}\hfil\\
There exists an explicit constant $\lambda>0$ {\color{black}depending explicitly on the reaction matrix $A$, the domain $\Omega$, the diffusion matrix $D$ and the initial mass $M$} such that
	\begin{equation*}
		\mathcal{D}(W|X_{\infty}) \geq \lambda\,\mathcal{E}(W|X_{\infty}).
	\end{equation*}
\end{lemma}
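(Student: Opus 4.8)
The plan is to exploit the zero-mass conservation law $\sum_{i=1}^N \overline{w_i} = 0$ by splitting each component into its spatial average and its mean-zero fluctuation, $w_i = \overline{w_i} + \delta_i$ with $\delta_i := w_i - \overline{w_i}$ and $\iO \delta_i\,dx = 0$. Since $|\Omega|=1$, all cross terms arising in $\mathcal{E}$ and $\mathcal{D}$ integrate to zero, so both functionals split cleanly into a fluctuation part and an average part:
\[
\mathcal{E}(W|X_{\infty}) = \sum_{i=1}^N \frac{|\overline{w_i}|^2}{u_{i,\infty}} + \sum_{i=1}^N \frac{\|\delta_i\|^2}{u_{i,\infty}},
\]
\[
\mathcal{D}(W|X_{\infty}) \geq 2\sum_{i=1}^N \frac{d_i}{u_{i,\infty}}\|\nabla\delta_i\|^2 + \sum_{i<j}(a_{ij}u_{j,\infty} + a_{ji}u_{i,\infty})\Bigl(\tfrac{\overline{w_i}}{u_{i,\infty}} - \tfrac{\overline{w_j}}{u_{j,\infty}}\Bigr)^2,
\]
where I have discarded the nonnegative fluctuation--reaction terms (these are only needed in the degenerate-diffusion case, Theorem \ref{DegenerateDiffusion}). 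It then suffices to bound each piece on the right of $\mathcal{D}$ below by a constant times the corresponding piece of $\mathcal{E}$, and to take $\lambda$ as the minimum of the two constants.

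For the fluctuation part this is immediate: the Poincar\'e--Wirtinger inequality $\|\nabla\delta_i\|^2 \geq C_P\|\delta_i\|^2$ (valid because $\iO\delta_i\,dx = 0$) together with $d_i>0$ for all $i$ yields $2\sum_i \frac{d_i}{u_{i,\infty}}\|\nabla\delta_i\|^2 \geq 2C_P(\min_i d_i)\sum_i \frac{\|\delta_i\|^2}{u_{i,\infty}}$. For the average part, set $\xi_i := \overline{w_i}/u_{i,\infty}$ and $c_{ij} := a_{ij}u_{j,\infty} + a_{ji}u_{i,\infty} \geq 0$. Then the average reaction dissipation is exactly a weighted graph-Laplacian quadratic form, $\sum_{i<j}c_{ij}(\xi_i-\xi_j)^2 = \xi^{T}L\xi$, while $\sum_i |\overline{w_i}|^2/u_{i,\infty} = \xi^{T}B\xi$ with $B := \mathrm{diag}(u_{1,\infty},\ldots,u_{N,\infty})$, and the zero-mass constraint reads $\mathbf{1}^{T}B\xi = 0$. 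Here the structural input enters: the network is weakly reversible and connected, hence strongly connected (Lemmata \ref{DecompConnect} and \ref{Characteristic}), and since all $u_{i,\infty}>0$ (Proposition \ref{UniqueEqui}) one has $c_{ij}>0$ precisely when $S_i$ and $S_j$ are joined by an edge of $G$; thus the undirected weighted graph with weights $c_{ij}$ is connected and $\ker L = \mathrm{span}\{\mathbf{1}\}$. Consequently $\mathbf{1}^{T}B\xi = 0$ places $\xi$ in the $B$-orthogonal complement of $\ker L$, where the generalised Rayleigh quotient $\xi^{T}L\xi/\xi^{T}B\xi$ is bounded below by the smallest positive eigenvalue $\mu$ of the pencil $L\xi = \mu B\xi$; this $\mu$ is explicit, since $L$ and $B$ are given in closed form by the reaction rates and the cofactor formula for $X_{\infty}$ from Proposition \ref{UniqueEqui}. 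Combining, the lemma holds with $\lambda := \min\{2C_P\min_i d_i,\ \mu\}$.

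I expect the finite-dimensional spectral step to be the main obstacle: one must verify that $\xi^{T}L\xi$ is positive definite precisely on the hyperplane $\{\mathbf{1}^{T}B\xi=0\}$. This hinges entirely on the chain ``weakly reversible $\Leftrightarrow$ strongly connected $\Rightarrow$ undirected graph connected $\Rightarrow \dim\ker L = 1$'', together with the strict positivity of $X_{\infty}$ — without the latter some weights $c_{ij}$ could vanish and disconnect the graph, destroying the estimate. The orthogonal splitting and the Poincar\'e step are routine. An alternative to the generalised-eigenvalue argument would be a compactness--contradiction scheme on the hyperplane, but it would only give a non-explicit $\mu$, whereas the approach above keeps $\lambda$ computable as required.
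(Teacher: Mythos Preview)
Your proof is correct and follows the same overall architecture as the paper's: orthogonal splitting into spatial averages and mean-zero fluctuations, Poincar\'e for the fluctuation part, and a finite-dimensional inequality on the hyperplane $\sum_i \overline{w_i}=0$ for the average part. The only genuine difference lies in how that last finite-dimensional inequality is established. The paper first uses weak reversibility to ``fill in'' every missing pair $(i,j)$ by running along a directed path in $G$ and telescoping (its estimate \eqref{e12}), thereby reducing to the complete graph with a uniform weight $\xi>0$; it then exploits the elementary identity $\sum_i \overline{w_i}^2 = -2\sum_{i<j}\overline{w_i}\,\overline{w_j}$ (a direct consequence of $\sum_i\overline{w_i}=0$) to conclude. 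You instead recognise the averaged reaction term as the quadratic form of the weighted graph Laplacian $L$ and invoke its spectral gap on the $B$-orthogonal complement of $\mathbf{1}$, using that strong connectivity of $G$ forces the underlying undirected weighted graph to be connected and hence $\ker L=\mathrm{span}\{\mathbf{1}\}$. Your route is more conceptual and identifies the sharp constant $\mu$ for the average part as a generalised eigenvalue; the paper's route is more hands-on, avoids any eigenvalue computation, and tracks an explicit if cruder constant through a chain of elementary inequalities. Both are legitimate realisations of ``explicitly computable $\lambda$''.
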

\begin{proof}
We divide the proof in several steps:
	
\noindent {\bf Step 1.} (Additivity of the relative entropy w.r.t. spatial averages)\hfil\\
Straightforward calculation leads to 
\begin{align}
\mathcal{E}(W|X_{\infty}) &= \sum_{i=1}^{N}\int_{\Omega}\frac{|w_i|^2}{u_{i,\infty}}dx= \sum_{i=1}^{N}\int_{\Omega}\frac{|w_i - \overline{w_i}|^2}{u_{i,\infty}}dx + \sum_{i=1}^{N}\frac{|\overline{w_i}|^2}{u_{i,\infty}}\nonumber\\
&= \mathcal{E}(W-\overline{W}|X_{\infty}) + \mathcal{E}(\overline{W}|X_{\infty})
\label{e7} 
\end{align}
where we denote $\overline{W} = (\overline{w_1}, \overline{w_2}, \ldots, \overline{w_N})$ and we recall that $\overline{w_i}=\iO w_i\,dx$ for $i=1,\ldots,N$ due to $|\Omega|=1$.
	
\noindent {\bf Step 2.} (Entropy dissipation due to diffusion)\\
By using Poincar\'e's inequality
\begin{equation}\label{Poincare}
\|\nabla f\|^2 \geq C_P\|f - \overline{f}\|^2, \qquad \text{ for all } f\in H^1(\Omega),
\end{equation}
we have
\begin{align} 
\frac{1}{2}\mathcal{D}(W|X_{\infty}) &\geq \sum_{i=1}^{N}d_i\int_{\Omega}\frac{|\nabla w_i|^2}{u_{i,\infty}}dx	
\geq C_P\sum_{i=1}^{N}d_i\int_{\Omega}\frac{|w_i - \overline{w_i}|^2}{u_{i,\infty}}dx\nonumber \\
&\geq C_P\min\{d_1,d_2,\ldots,d_N\}\,\mathcal{E}(W-\overline{W}|X_{\infty}).
\label{e8}
\end{align}

\noindent {\bf Step 3.} (Entropy dissipation due to reactions) \hfil\\
From \eqref{e7} and \eqref{e8}, it remains to control 
		$$
			\mathcal{E}(\overline{W}|X_{\infty}) =  \sum_{i=1}^{N}\frac{\overline{w_i}^2}{u_{i,\infty}}.
		$$
By using Jensen's inequality we have, recalling that $|\Omega| = 1$,
\begin{align}
\frac{1}{2}\mathcal{D}(W|X_{\infty}) &\geq \frac{1}{2}\sum_{i,j=1;i<j}^{N}(a_{ij}u_{j,\infty} + a_{ji}u_{i,\infty})\int_{\Omega}\left( \frac{w_i}{u_{i,\infty}} - \frac{w_j}{u_{j,\infty}}\right)^2dx\nonumber\\
&\geq \frac{1}{2}\sum_{i,j=1;i<j}^{N}(a_{ij}u_{j,\infty} + a_{ji}u_{i,\infty})\left( \frac{\overline{w_i}}{u_{i,\infty}} - \frac{\overline{w_j}}{u_{j,\infty}}\right)^2dx.\label{e9}
\end{align}
It then remains to prove that
\begin{equation}\label{e10}
\frac{1}{2}\sum_{i,j=1;i<j}^{N}(a_{ij}u_{j,\infty} + a_{ji}u_{i,\infty})\left( \frac{\overline{w_i}}{u_{i,\infty}} - \frac{\overline{w_j}}{u_{j,\infty}}\right)^2dx \geq \gamma \sum_{i=1}^{N}\frac{\overline{w_i}^2}{u_{i,\infty}}
\end{equation}
for some $\gamma>0$. Note that if both reactions $S_i \rightarrow S_j$ and $S_j \rightarrow S_i$ do not appear in the reaction network, then we have $a_{ij} = a_{ji} = 0$ and thus
$$
	a_{ij}u_{j,\infty} + a_{ji}u_{i,\infty} = 0.
$$
Hence, the expression
$$
\sum_{i,j=1;i<j}^{N}(a_{ij}u_{j,\infty} + a_{ji}u_{i,\infty})\left( \frac{\overline{w_i}}{u_{i,\infty}} - \frac{\overline{w_j}}{u_{j,\infty}}\right)^2dx
$$
may not contain all pairs $(i,j)$ with $i\not=j$. However, the weak reversibility of the network allows to make all pairs $(i,j)$ with $i\not=j$ appear 
in the following sense: There exists an explicit constant $\xi>0$ such that
\begin{equation}\label{e11}
\sum_{i,j=1;i<j}^{N}(a_{ij}u_{j,\infty} + a_{ji}u_{i,\infty})\left( \frac{\overline{w_i}}{u_{i,\infty}} - \frac{\overline{w_j}}{u_{j,\infty}}\right)^2dx \geq \xi\sum_{i,j=1;i<j}^{N}\left(\frac{\overline{w_i}}{u_{i,\infty}} - \frac{\overline{w_j}}{u_{j,\infty}}\right)^2.
\end{equation}
Indeed, assume that $a_{ij} = a_{ji} = 0$ for some $i\not=j$. Due to the weak reversibility of the network, there exists a path from $S_i$ to $S_j$ as follows
$$
S_i \equiv S_{j_1} \xrightarrow{a_{j_2j_1}} S_{j_2} \xrightarrow{a_{j_3j_2}} \ldots \xrightarrow{a_{j_{r}j_{r-1}}} S_{j_{r}} \equiv S_j
$$
		with $r\geq 3$ and $a_{j_kj_{k-1}} > 0$ for all $k = 2,3,\ldots, r$. Thus, with 
		\[
			{\color{black}0<\sigma = \min_{(a_{ij}, a_{ji})\not=(0,0);1\le i<j\le N}\{a_{ij}u_{i,\infty} + a_{ji}u_{j,\infty} \} \leq \min_{2\leq k\leq r}\{a_{j_kj_{k-1}}u_{j_{k-1},\infty} + a_{j_{k-1}j_k}u_{j_k,\infty} \}}
		\]
	 we have
\begin{multline}
\sum_{k=2}^{r}(a_{j_kj_{k-1}}u_{j_{k-1},\infty} + a_{j_{k-1}j_k}u_{j_{k},\infty})\left(\frac{\overline{w_{j_{k}}}}{u_{j_k,\infty}} - \frac{\overline{w_{j_{k-1}}}}{u_{j_{k-1},\infty}}\right)^2\\
\geq {\sigma}\sum_{k=2}^{r}\left(\frac{\overline{w_{j_{k}}}}{u_{j_k,\infty}} - \frac{\overline{w_{j_{k-1}}}}{u_{j_{k-1},\infty}}\right)^2\\
\geq \frac{\sigma}{r-1}\left(\frac{\overline{w_{j_{1}}}}{u_{j_1,\infty}} - \frac{\overline{w_{j_{r}}}}{u_{j_{r},\infty}}\right)^2
= \frac{\sigma}{N-1}\left(\frac{\overline{w_{i}}}{u_{i,\infty}} - \frac{\overline{w_{j}}}{u_{j,\infty}}\right)^2.
			\label{e12}
\end{multline}
Since there {\color{black} are less than $N(N-1)/2$} pairs $(i,j)$ with $a_{ij} = a_{ji} = 0$, 
we can repeat this procedure to finally get \eqref{e11} {\color{black}with $\xi = 2\sigma/(N(N-1)^2)$}. From \eqref{e10} and \eqref{e11}, we are left to find a constant 
$\gamma>0$ satisfying
\begin{equation}\label{e13} 
\sum_{i,j=1;i<j}^{N}\left(\frac{\overline{w_i}}{u_{i,\infty}} - \frac{\overline{w_j}}{u_{j,\infty}}\right)^2 \geq \frac{2\gamma}{\xi}\sum_{i=1}^{N}\frac{\overline{w_i}^2}{u_{i,\infty}}
\end{equation}
with the constraint of the conserved zero total mass 
\begin{equation}\label{e14}
	\sum_{i=1}^{N}\overline{w_i} = 0.
\end{equation}
Because of \eqref{e14},
\begin{equation}\label{e14_1}
	\sum_{i=1}^{N}\overline{w_i}^2 = -\sum_{i,j=1;i\neq j}^{N}\overline{w_i}\,\overline{w_j}= -2\sum_{i,j=1;i<j}^{N}\overline{w_i}\,\overline{w_j}.
\end{equation}
Therefore, we can estimate for $C = \min_{1\le i<j\le N}\frac{1}{u_{i,\infty}u_{j,\infty}}$
\begin{multline}
\sum_{i,j=1;i<j}^{N}\left(\frac{\overline{w_i}}{u_{i,\infty}} - \frac{\overline{w_j}}{u_{j,\infty}}\right)^2 
\geq \min_{i<j}\frac{1}{u_{i,\infty}u_{j,\infty}}\sum_{i<j}u_{i,\infty}u_{j,\infty}\left(\frac{\overline{w_i}}{u_{i,\infty}} - \frac{\overline{w_j}}{u_{j,\infty}}\right)^2 \\
\geq -2\min_{i<j}\frac{1}{u_{i,\infty}u_{j,\infty}}\sum_{i<j}\overline{w_i}\;\overline{w_j}
= \min_{i<j}\frac{1}{u_{i,\infty}u_{j,\infty}}\sum_{i=1}^{N}\overline{w_i}^2
\geq \min_{i<j}\frac{1}{u_{i,\infty}u_{j,\infty}}\sum_{i=1}^{N}\frac{\overline{w_i}^2}{u_{i,\infty}}.
\label{e14_2} 
\end{multline}
In conclusion, we have proved \eqref{e13} {\color{black}with $\gamma = \frac{\xi}{2}\min_{i<j}\frac{1}{u_{i,\infty}u_{j,\infty}}$}, which in combination with \eqref{e11} implies \eqref{e10} and thus completes the proof of this Lemma.
\end{proof}

\begin{theorem}[Convergence to Equilibrium]\hfil\label{Convergence}\\
\textcolor{black}{
Consider (w.l.o.g) a strongly connected subnetwork $\mathcal{N}$
of a weakly reversible first order reaction network. 
Assume for $\mathcal{N}$ that 
the diffusion coefficients $d_i$ are positive for all $i = 1,2,\ldots, N$, and the initial mass $M$ is positive. 
}

Then, the unique global solution to \eqref{VectorSystem_ReCall} 
converges to the unique positive equilibrium $X_{\infty}$ in the following sense:
\begin{equation*}
\sum_{i=1}^{N}\int_{\Omega}\frac{|u_i(t) - u_{i,\infty}|^2}{u_{i,\infty}}dx \leq e^{-\lambda t}\sum_{i=1}^{N}\int_{\Omega}\frac{|u_{i,0} - u_{i,\infty}|^2}{u_{i,\infty}}dx,
\end{equation*}
where the constant $\lambda >0$ {\color{black}is computed as in Lemma \ref{EEDEstimate}}.
\end{theorem}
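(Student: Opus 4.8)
The plan is to obtain the result as a direct Gronwall consequence of the two preceding lemmas, so almost all the real work has already been done. First I would pass to the shifted variable $W = X - X_{\infty}$. Since the system \eqref{VectorSystem_ReCall} is linear and $X_{\infty}$ is a spatially homogeneous stationary solution — indeed $\partial_t X_{\infty} - D\triangle X_{\infty} = 0 = A X_{\infty}$ by \eqref{EquilibriumSystem} — the difference $W$ is again a solution of \eqref{VectorSystem_ReCall}, now with initial datum $X_0 - X_{\infty}$ and with vanishing total mass, $\sum_{i=1}^N \overline{w_i}(t) = 0$ for all $t \ge 0$. Because $X_{\infty} > 0$ by Proposition \ref{UniqueEqui}, all its components are bounded away from zero, so the pair $(W, X_{\infty})$ legitimately plays the role of $(X, Y)$ in Lemma \ref{ExplicitEnDiss} with $v_i = u_{i,\infty} \ne 0$, and the relative entropy functional reduces to $\mathcal{E}(W|X_{\infty})(t) = \sum_{i=1}^N u_{i,\infty}^{-1}\,\|w_i(t)\|^2$.

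Next, for $t>0$ parabolic smoothing upgrades $W$ to a classical solution, so the differentiation carried out in Lemma \ref{ExplicitEnDiss} is rigorous and yields
\[
\frac{d}{dt}\mathcal{E}(W|X_{\infty})(t) = -\,\mathcal{D}(W|X_{\infty})(t), \qquad t>0.
\]
Applying the entropy--entropy dissipation estimate of Lemma \ref{EEDEstimate}, namely $\mathcal{D}(W|X_{\infty}) \ge \lambda\,\mathcal{E}(W|X_{\infty})$ with the explicit constant $\lambda>0$ built there from $C_P\min_i d_i$, the reaction rates $a_{ij}$, the equilibrium values $u_{i,\infty}$ and the path constants coming from weak reversibility, we arrive at the differential inequality
\[
\frac{d}{dt}\mathcal{E}(W|X_{\infty})(t) \le -\lambda\,\mathcal{E}(W|X_{\infty})(t), \qquad t>0.
\]
A Gronwall argument on $(\varepsilon,t)$ then gives $\mathcal{E}(W|X_{\infty})(t) \le e^{-\lambda(t-\varepsilon)}\mathcal{E}(W|X_{\infty})(\varepsilon)$; letting $\varepsilon \to 0^+$ and using that $t \mapsto \mathcal{E}(W|X_{\infty})(t) = \sum_i u_{i,\infty}^{-1}\|w_i(t)\|^2$ is continuous on $[0,\infty)$ (since $X \in C([0,T];(L^2(\Omega))^N)$) yields $\mathcal{E}(W|X_{\infty})(t) \le e^{-\lambda t}\mathcal{E}(W|X_{\infty})(0)$. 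Rewriting $w_i = u_i - u_{i,\infty}$ and $w_{i,0} = u_{i,0} - u_{i,\infty}$ is precisely the claimed estimate.

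There is no serious remaining obstacle — the substance of the theorem lives in Lemma \ref{ExplicitEnDiss} and Lemma \ref{EEDEstimate}. The only points that deserve a line of care are (i) justifying the time differentiation of $\mathcal{E}$, which is handled by parabolic regularity for $t>0$ together with $L^2$-continuity at $t=0$, and (ii) verifying that the constant equilibrium $X_{\infty}$ is an admissible second trajectory in Lemma \ref{ExplicitEnDiss}, which holds because constants solve \eqref{VectorSystem_ReCall} and $X_{\infty}$ is strictly positive. The explicitness of $\lambda$ is inherited verbatim from Lemma \ref{EEDEstimate}.
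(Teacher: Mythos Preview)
Your proposal is correct and follows essentially the same approach as the paper: pass to $W=X-X_\infty$, combine Lemma~\ref{ExplicitEnDiss} with the entropy--entropy dissipation estimate of Lemma~\ref{EEDEstimate} to obtain $\frac{d}{dt}\mathcal{E}(W|X_\infty)\le -\lambda\,\mathcal{E}(W|X_\infty)$, and conclude by Gronwall. Your extra care about parabolic regularity for $t>0$ and $L^2$-continuity at $t=0$ is a nice touch that the paper omits, but the argument is otherwise identical.
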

\begin{proof}
	From Lemma \ref{EEDEstimate} we have
	\begin{equation*}
		\frac{d}{dt}\mathcal{E}(X-X_{\infty}|X_{\infty}) = -\mathcal{D}(X-X_{\infty}|X_{\infty}) \leq \lambda\, \mathcal{E}(X - X_{\infty}|X_{\infty}).
	\end{equation*}
	By Gronwall's inequality,
	\begin{equation*}
		\mathcal{E}(X(t) - X_{\infty}|X_{\infty}) \leq e^{-\lambda t}\mathcal{E}(X_0 - X_{\infty}|X_{\infty}),
	\end{equation*}
	and the proof is complete.
\end{proof}
\textcolor{black}{
\begin{proof}[Proof of Theorem \ref{FirstResult}]
Theorem \ref{FirstResult} is a direct consequence of Theorem \ref{Convergence} and the partition of 
weakly reversible first order reaction network into strongly connected components.
\end{proof}
} 
\medskip

We now turn to the case of degenerate diffusion, where some of the diffusion coefficients $d_i$ can be zero. In the proof of the Theorem \ref{Convergence}, we have used non-degenerate diffusion of all species in order to control distance of the concentrations to their spatial averages (see estimate \eqref{e8}). This procedure must thus be adapted in the case of degenerate diffusion. 

It was already proven in \cite{DeFe_Con, BaFeEv14, MiHaMa14} that even if some diffusion coefficients vanish, one can still show exponential convergence to equilibrium provided reversible reactions. 
The technique used in these mentioned references is based on the fact that diffusion of one specie, which is connected through a reversible 
reaction with another specie, induces a indirect kind of "diffusion effect" to the latter specie. 

We will prove that this principle is still valid for weakly reversible reaction networks as considered in this section.
\begin{theorem}[Convergence to Equilibrium with Degenerate Diffusion]\label{ConvDegDiffusion}\hfil\\
\textcolor{black}{
Consider (w.l.o.g) a strongly connected subnetwork $\mathcal{N}$ 
of a weakly reversible first order reaction network.
Assume for $\mathcal{N}$ that the initial mass $M$ is positive. 
Moreover, assume that at least one diffusion coefficient $d_i$ is positive for some $i = 1, 2, \ldots, N$. 
}
	
	Then, the solution to \eqref{VectorSystem_ReCall} converges exponentially to equilibrium via the following estimate
	\begin{equation*}
		\sum_{i=1}^{N}\int_{\Omega}\frac{|u_i(t) - u_{i,\infty}|^2}{u_{i,\infty}}dx \leq e^{-\lambda' t}\sum_{i=1}^{N}\int_{\Omega}\frac{|u_{i,0} - u_{i,\infty}|^2}{u_{i,\infty}}\,dx,
	\end{equation*}
	for some explicit rate $\lambda'>0$ {\color{black}which depends explicitly on $A$, $\Omega$, $D$ and $M$}..
\end{theorem}
\begin{proof}
We aim for a similar entropy-entropy dissipation inequality as stated in Lemma \eqref{EEDEstimate}, i.e. we want to find a constant $\lambda'>0$ such that
\begin{equation}\label{e15}
\mathcal{D}(W|X_{\infty}) \geq \lambda'\, \mathcal{E}(W|X_{\infty}) = \lambda'\,[\mathcal E(W-\overline{W}|X_{\infty}) + \mathcal E(\overline{W}|X_{\infty})].
\end{equation}
Due to the degenerate diffusion, the diffusion part of $\mathcal{D}(W|X_{\infty})$ is insufficent to control $\mathcal E(W - \overline{W}|X_{\infty})$ as in \eqref{e8}, since some of diffusion coefficients can be zero. This difficulty can be resolved by quantifying the fact that diffusion of one specie is transferred to another species when connected via a weakly reversible reaction path.
Without loss of generality, we assume that $d_1>0$ and estimate $\mathcal{D}(W|X_{\infty})$ by
\begin{equation}\label{e16}
		\mathcal{D}(W|X_{\infty}) \geq d_1\int_{\Omega}\frac{|\nabla w_1|^2}{u_{1,\infty}}\,dx + \sum_{i,j=1;i<j}^{N}(a_{ij}u_{j,\infty}+a_{ji}u_{i,\infty})\int_{\Omega}\left(\frac{w_i}{u_{i,\infty}} - \frac{w_j}{u_{j,\infty}}\right)^2dx.
\end{equation}
By arguments similar to \eqref{e11} and \eqref{e12}, we have
\begin{equation}\label{e17}
		\mathcal{D}(W|X_{\infty}) \geq d_1\int_{\Omega}\frac{|\nabla w_1|^2}{u_{1,\infty}}dx + \xi\sum_{i,j=1;i<j}^{N}\int_{\Omega}\left(\frac{w_i}{u_{i,\infty}} - \frac{w_j}{u_{j,\infty}}\right)^2dx.
\end{equation}
To control $\mathcal{E}(W - \overline{W}|X_{\infty})$, we use the following estimate for all $i=2,3,\ldots,N$:
\begin{equation}\label{e18}
\int_{\Omega}\frac{|\nabla w_1|^2}{u_{1,\infty}}\,dx + \int_{\Omega}\left(\frac{w_1}{u_{1,\infty}} - \frac{w_i}{u_{i,\infty}}\right)^2dx \geq \beta\int_{\Omega}\frac{|w_i - \overline{w_i}|^2}{u_{i,\infty}}\,dx, 
\end{equation}
{\color{black}with $\beta = \frac{1}{2u_{1,\infty}}\min\left\{\frac{C_P}{u_{1,\infty}}, 1 \right\}$}: Indeed, thanks to Poincar\'{e}'s inequality $\|\nabla f\|^2 \geq C_P\|f - \overline{f}\|^2$, we estimate for various sufficiently small constants $C$
\begin{align}
\int_{\Omega}\frac{|\nabla w_1|^2}{u_{1,\infty}}\,dx &+ \int_{\Omega}\left(\frac{w_1}{u_{1,\infty}} - \frac{w_i}{u_{i,\infty}}\right)^2dx\nonumber 
\geq \int_{\Omega}\left[C_P\,\frac{|w_1 - \overline{w_1}|^2}{u_{1,\infty}} + \left(\frac{w_1 - \overline{w_1}}{u_{1,\infty}} + \frac{\overline{w_1}}{u_{1,\infty}}- \frac{w_i}{u_{i,\infty}}\right)^2\right]dx\nonumber \\
&\geq \frac 12\min\left\{\frac{C_P}{u_{1,\infty}}, 1 \right\}\int_{\Omega}\left(\frac{\overline{w_1}}{u_{1,\infty}}- \frac{w_i}{u_{i,\infty}}\right)^2dx\nonumber \\
&= \frac 12\min\left\{\frac{C_P}{u_{1,\infty}}, 1 \right\}\int_{\Omega}\left(\frac{\overline{w_1}}{u_{1,\infty}}- \frac{\overline{w_i}}{u_{i,\infty}} + \frac{\overline{w_i}}{u_{i,\infty}} - \frac{w_i}{u_{i,\infty}}\right)^2dx\nonumber \\
&= \frac 12\min\left\{\frac{C_P}{u_{1,\infty}}, 1 \right\}\int_{\Omega}\left(\frac{\overline{w_1}}{u_{1,\infty}}- \frac{\overline{w_i}}{u_{i,\infty}}\right)^2dx + \frac 12\min\left\{\frac{C_P}{u_{1,\infty}}, 1 \right\}\int_{\Omega}\left(\frac{\overline{w_i}}{u_{i,\infty}} - \frac{w_i}{u_{i,\infty}}\right)^2dx\nonumber \\
&\geq \frac{1}{2u_{1,\infty}}\min\left\{\frac{C_P}{u_{1,\infty}}, 1 \right\}\int_{\Omega}\frac{|w_i - \overline{w_i}|^2}{u_{i,\infty}}\,dx.
	\label{e19} 
\end{align}
	
Now, thanks to \eqref{e17} and \eqref{e18}
\begin{align}
\mathcal{D}(W|X_{\infty}) &\geq {\color{black}\min\left\{\frac{d_1}{N},\frac{\xi}{2} \right\}\beta}\sum_{i=1}^{N}\int_{\Omega}\frac{|w_i - \overline{w_i}|^2}{u_{i,\infty}}dx + \frac{\xi}{2}\sum_{i,j=1;i<j}^{N}\int_{\Omega}\left(\frac{w_i}{u_{i,\infty}} - \frac{w_j}{u_{j,\infty}}\right)^2dx\nonumber \\
&\geq {\color{black}\min\left\{\frac{d_1}{N},\frac{\xi}{2} \right\}\beta}\,\mathcal{E}(W - \overline{W}|X_{\infty}) + \frac{\xi}{2}\sum_{i,j=1;i<j}^{N}\int_{\Omega}\left(\frac{\overline{w_i}}{u_{i,\infty}} - \frac{\overline{w_j}}{u_{j,\infty}}\right)^2dx\nonumber \\
&\geq {\color{black}\min\left\{\frac{d_1}{N},\frac{\xi}{2} \right\}\beta}\mathcal{E}(W - \overline{W}|X_{\infty}) + \frac{\gamma}{4}\mathcal{E}(\overline{W}|X_{\infty}) \qquad\qquad (\text{by using }\eqref{e13})\nonumber \\
&\geq \lambda'\,\mathcal{E}(W|X_{\infty})\label{e20}
\end{align}
{\color{black}with $\lambda' = \min\left\{\frac{\beta d_1}{N}, \frac{\xi \beta}{2}, \frac{\gamma}{4}\right\}$}.
Thus \eqref{e15} is proved and the proof is complete.
\end{proof}

\textcolor{black}{
\begin{proof}[Proof of Theorem \ref{DegenerateDiffusion}]
Theorem \ref{DegenerateDiffusion} is a direct consequence of Theorem \ref{ConvDegDiffusion} and the partition of 
weakly reversible first order reaction network into strongly connected components.
\end{proof}
} 
\medskip

\begin{remark}\label{DiffReactCoupl}
The estimate \eqref{e18} is usually interpreted as follows: the sum of the  dissipation due to the diffusion of $w_1$ and the dissipation caused by the reaction between $w_1$ and $w_i$ are bounded below by \eqref{e19}, which is essentially a diffusion dissipation term of the specie $w_i$ (after having applied Poincar\'e's inequality). In this sense, a "diffusion effect" has been transferred onto $w_i$.

We remark that while the presented proof for the linear case is straightforward, the proof of an analogous estimate to \eqref{e18} in  nonlinear cases turns out to be quite tricky. Readers are referred to \cite{DeFe_Con} or \cite[Lemma 3.6]{BaFeEv14} for more details.
\end{remark}

\section{Non-weakly reversible networks}\label{non-weakly}

In this section, we consider \textcolor{black}{(w.l.o.g.) reaction networks $\mathcal{N}$ which are not weakly reversible, yet form one linkage class. Thus, the corresponding directed graph $G$
is connected yet not strongly connected  (i.e. the underlying undirected graph of $G$ is connected)}. We will show that in the large time behaviour, each specie tends exponentially fast either to zero or to a positive equilibrium value depending on its position in the graph representing the network. 

For weakly reversible reaction-diffusion networks (corresponding to strongly connected graphs), it was proven in Section \ref{weakly} that each specie converges exponentially fast to a unique, positive equilibrium value, which is given explicitly in terms of the reaction rates and the conserved initial total mass. 

For non weakly reversible reaction networks, however, we will show that while the equilibria are still unique and attained exponentially fast, the equilibrium values are in general no longer explicitly given but depend on the position in the graph in general and on the history of the concentrations of the influencing species in particular. 

Moreover, since non weakly reversible reaction networks \eqref{VectorSystem_ReCall} may no longer have positive equilibria, the relative entropy method used in Section \ref{weakly} is not directly applicable. Nevertheless, we will see that the relative entropy and the ideas of the entropy method still play the essential role our analysis of non-weakly reversible networks.

As the large time behaviour of the species depend on their position within the network, we need to first state some important properties of the graph $G$. The following Lemmas \ref{newgraph} and \ref{TopoOrder} are well known in graph theory. We refer the reader to the book \cite{BJG08} for a reference.
\begin{lemma}[Strongly connected components form acyclic graphs $G^C$] \label{newgraph}\hfil\\
Let $G$ be a directed graph which is connected, that is the underlying undirected graph of $G$ is connected, but not strongly connected such that the graph $G$ contains at least 
$r\geq 2$ strongly connect components, which we shall denote by $C_1, C_2, \ldots, C_r$. Thus, we can define a directed graph $G^C$ of strongly connected components as follows
\begin{itemize}
\item[-] $G^C$ has as nodes the $r$ strongly connected components $C_1, C_2, \ldots, C_r$,
\item[-] for two nodes $C_i$ and $C_j$ of $G^C$, if there exists a reaction $C_i \ni S_k \xrightarrow{a_{\ell k}} S_\ell\in C_j$ with $a_{\ell k} >0$, then we define a directed edge $C_i \rightarrow C_j$ of $G^C$.
\end{itemize}
Then, the directed graph $G^C$ is acyclic, that is $G^C$ does not contain any cycles.
\end{lemma}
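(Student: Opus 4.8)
The plan is to argue by contradiction: suppose that the condensation graph $G^C$ contains a directed cycle, and derive a contradiction with the maximality of the strongly connected components $C_1, \ldots, C_r$. So assume there is a cycle $C_{i_1} \rightarrow C_{i_2} \rightarrow \cdots \rightarrow C_{i_m} \rightarrow C_{i_1}$ in $G^C$ with $m \geq 2$ and with the $C_{i_k}$ pairwise distinct (if some repeat, we may shorten the cycle to a simple one). The key step is to translate each edge $C_{i_k} \rightarrow C_{i_{k+1}}$ of $G^C$ back, via the definition in the statement, into the existence of species $S_{p_k} \in C_{i_k}$ and $S_{q_k} \in C_{i_{k+1}}$ together with a reaction $S_{p_k} \xrightarrow{a_{q_k p_k}} S_{q_k}$ of positive rate, i.e. an actual directed edge $S_{p_k} \rightarrow S_{q_k}$ in the original graph $G$.

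The central observation is then that within each component $C_{i_k}$, which is strongly connected by definition, there is a directed path in $G$ from the entry node $S_{q_{k-1}}$ (the target of the incoming edge) to the exit node $S_{p_k}$ (the source of the outgoing edge), all of whose intermediate nodes lie in $C_{i_k}$. Concatenating, for $k = 1, \ldots, m$, these internal paths with the connecting edges $S_{p_k} \rightarrow S_{q_k}$, one obtains a single closed directed walk in $G$ that visits at least one node from each of the distinct components $C_{i_1}, \ldots, C_{i_m}$. Along such a closed walk every pair of visited nodes is mutually reachable, so in particular for any $S \in C_{i_1}$ and $S' \in C_{i_2}$ there is a directed path $S \rightarrow \cdots \rightarrow S'$ and a directed path $S' \rightarrow \cdots \rightarrow S$; but then $C_{i_1}$ and $C_{i_2}$ would be contained in a common strongly connected set, contradicting that strongly connected components are maximal such sets (and hence pairwise disjoint, as recalled in the excerpt). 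This contradiction shows no cycle can exist, so $G^C$ is acyclic.

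I expect the only mild subtlety to be the bookkeeping in the concatenation step, namely making sure that the internal path in $C_{i_k}$ genuinely connects the correct entry node $S_{q_{k-1}}$ to the correct exit node $S_{p_k}$ — this is exactly where strong connectedness of each $C_{i_k}$ is used, and it is essential that $S_{q_{k-1}}$ and $S_{p_k}$ both lie in $C_{i_k}$, which holds by construction. Everything else is a standard reachability argument. Since the statement itself notes that this fact is well known in graph theory (with the reference \cite{BJG08}), a short self-contained argument along these lines, or simply a pointer to the literature, suffices; I would present the contradiction argument above in a few lines for completeness.
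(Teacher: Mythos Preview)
Your argument is correct and is precisely the standard contradiction argument the paper has in mind: the paper's proof is a one-line sketch (``if $G^C$ would contain a cycle then this cycle should have been contained in a strongly connected component in the first place'') together with a citation to \cite[Chapter 1]{BJG08}, and your plan simply spells out this sketch in detail. No changes are needed.
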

\begin{proof}
The proof can be found in e.g. \cite[Chapter 1]{BJG08} and shows that if $G^C$ would contain a cycle then this cycle should have been contained in a strongly connected component in the first place. 
\end{proof}

\begin{lemma}[{Topological order of acyclic graphs, \cite[Chapter 1]{BJG08}}]\label{TopoOrder}\hfil\\
There exists a reordering of the nodes of $G^C$ in such a way that for all direct edges $C_i \rightarrow C_j$ we always have $i < j$.
\end{lemma}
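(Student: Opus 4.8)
The plan is to prove the existence of a topological order by induction on the number $r$ of nodes of $G^C$, using the fact (which I would establish first as the key structural ingredient) that every finite directed acyclic graph possesses at least one \emph{source node}, i.e.\ a node with no incoming edges. Indeed, if every node had at least one incoming edge, then starting from any node and repeatedly following an incoming edge backwards would, after at most $r$ steps, revisit a node by the pigeonhole principle, thereby exhibiting a directed cycle and contradicting acyclicity of $G^C$ (Lemma \ref{newgraph}).

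Granting the existence of a source node, the induction proceeds as follows. For $r=1$ the statement is trivial. For the inductive step, let $C_{i_1}$ be a source node of $G^C$ and relabel it as the first node. Remove $C_{i_1}$ together with all edges emanating from it; the resulting graph $G^C \setminus \{C_{i_1}\}$ has $r-1$ nodes and is still acyclic (removing a node and edges cannot create a cycle). By the induction hypothesis, its remaining nodes can be reordered as $C_2, C_3, \ldots, C_r$ so that every directed edge among them goes from a lower to a higher index. Prepending $C_{i_1}$ as node $C_1$, we obtain an ordering of all $r$ nodes: any edge not incident to $C_1$ respects the order by the induction hypothesis, and any edge incident to $C_1$ must originate at $C_1$ (since $C_1$ is a source) and hence also points to a strictly larger index. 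This completes the induction.

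I do not expect a genuine obstacle here, since this is a classical result of graph theory and the argument is entirely elementary; the only point requiring a small amount of care is the proof that an acyclic graph has a source node, which is exactly the pigeonhole/backward-walk argument sketched above. Alternatively, one could invoke the standard reference \cite[Chapter 1]{BJG08} directly, as the statement itself indicates. I would include the short inductive argument above for completeness and to keep the paper self-contained.
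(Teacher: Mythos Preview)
Your proof is correct and entirely standard: the backward-walk pigeonhole argument yields a source node, and induction on $r$ does the rest. There is no gap.

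As for the comparison: the paper does not actually prove Lemma~\ref{TopoOrder}. It simply states the result, labels it as well known, and cites \cite[Chapter~1]{BJG08}. Your proposal therefore goes further than the paper by supplying a self-contained argument. Including it is harmless and makes the exposition more complete, though strictly speaking the paper's approach---deferring to the reference---is also acceptable for such a classical fact.
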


From now on, we will always consider topologically ordered graphs $G^C$. For each $i=1,2,\ldots, N$, we denote by $N_i$ the number of species belonging to $C_i$. For notational convenience later on, we shall set $L[0] = 0$ and
introduce the cumulative number $L[i]$ of the species contained in all strongly connected components up to $C_i$, i.e. 
\begin{equation}\label{L} 
	L[i] = N_1 + N_2 + \ldots + N_i \qquad \text{ for all } i=1,2,\ldots, r.
\end{equation}
We then reorder the species of the network $\mathcal{N}$ in such a order that the species belong to the component $C_i$ are $S_{L[i-1]+1}, S_{L[i-1]+2}, \ldots, S_{L[i]}$ for all $i=1,2,\ldots, N$. 
\medskip

Each component $C_i$ belongs to one of the following three types:
\begin{itemize}
	\item {\it Source component}: $C_i$ is a source component if there is no in-flow to $C_i$, i.e. there does not exist an edge $C_i\not\ni S_k\rightarrow S_j \in C_i$,
	\item {\it Target component}: $C_i$ is a target component if there is no out-flow from $C_i$, i.e. there does not exist an edge $C_i\ni S_k \rightarrow S_j\not\in C_i$,
	\item {\it Transmission component}: If $C_i$ is neither a source component nor a target component, then $C_i$ is called a transmission component.
\end{itemize}
\medskip

The above classification of strongly connected components greatly improves the notation of the corresponding dynamics, which quantifies the behaviour of the species belonging to the three types of components.
In the following, we denote by $X_{i} = (u_{L[i-1]+1}, u_{L[i-1]+2}, \ldots, u_{L[i]})^T$ the concentration vector of the species belonging to $C_i$. 

The evolution of the species belonging to a component $C_i$ depends 
on the type of $C_i$:
\begin{itemize}
\item[(i)] For a source component $C_i$, the system for $X_i$ is of the form
\begin{equation}\label{SourceSystem} 
\begin{cases}
	\partial_tX_{i} - D_i\Delta X_{i} = A_iX_{i} - F_{i}^{out}X_{i}, &x\in\Omega, \qquad t>0,\\
	\partial_{\nu}X_{i} = 0, &x\in\partial\Omega, \qquad t>0,\\
	X_{i}(x,0) = X_{i,0}(x), &x\in\Omega,
\end{cases}
\end{equation} 
where the diffusion matrix $D_i$ is 
\begin{equation}\label{DiffMatrix} 
	D_i = \text{diag}(d_{L[i-1]+1}, d_{L[i-1]+2}, \ldots, d_{L[i]}) \in \mathbb{R}^{N_i\times N_i},
\end{equation}
the reaction matrix $A_i$ is
\begin{equation}\label{SelfReacMatrix} 
	A_i = (a_{L[i-1]+k,L[i-1]+\ell})_{1\leq k, \ell \leq N_i} \in \mathbb{R}^{N_i\times N_i},
\end{equation}
and the out flow matrix is defined as
\begin{equation}\label{OutFlow} 
	F_{i}^{out} = \text{diag}(f_{L[i-1]+1}, f_{L[i-1]+2}, \ldots, f_{L[i]}) \in \mathbb{R}^{N_i\times N_i}
\end{equation}
with 
$$
	f_{L[i-1]+k} = \sum_{\ell=L[i]+1}^{N}a_{\ell,L[i-1]+k} \qquad \forall k=1,2,\ldots, N_i,
$$
where the lower summation index $L[i]+1$ follows for the topological order of the graph $G^C$. 

Roughly speaking, $f_{L[i-1]+k}$ is the sum of all the reaction rates from the specie $S_{L[i-1]+k}$ to species outside of $C_i$. It may happen that $f_{L[i-1]+k} = 0$ for some $k=1,2,\ldots,N$, but there exists at least one $k_0$ such that $f_{L[i-1]+k_0} > 0$ since $C_i$ is a source component.\\
	
\item[(ii)] If $C_i$ is a transmission component, the system for $X_i$ writes as
\begin{equation}\label{TransmissionSystem} 
	\begin{cases}
		\partial_tX_{i} - D_i\Delta X_{i} = \mathcal{F}_{i}^{in} + A_iX_{i} - F_{i}^{out}X_i, &x\in\Omega,\quad t>0,\\
		\partial_{\nu}X_{i} = 0, &x\in\partial\Omega, \quad t>0,\\
		X_{i}(x,0) = X_{i,0}(x), &x\in\Omega,
	\end{cases}
\end{equation} 
where the diffusion matrix $D_i$, the reaction matrix $A_i$ and the out flow matrix$F_{i}^{out}$ are defined as above in \eqref{DiffMatrix}, \eqref{SelfReacMatrix} and \eqref{OutFlow}, respectively. The in-flow vector  $\mathcal{F}_{i}^{in}$ is defined by 
\begin{equation}\label{InFlow}
	\mathcal{F}_{i}^{in} = \begin{pmatrix}
		z_{L[i-1]+1}\\
		z_{L[i-1]+2}\\
			\ldots \\
		z_{L[i]}\\
		\end{pmatrix}
\quad \text{ with } \quad z_{L[i-1]+\ell} = \sum_{k=1}^{L[i-1]}a_{L[i-1]+\ell, k}u_k. 
\end{equation}
We remark that by studying all components $C_i$ within the 
topological order of $G^C$, the dynamics of the previous components $C_1, C_2, \ldots, C_{i-1}$ is already known at the time we analyse the component $C_i$. Thus, in system \eqref{TransmissionSystem} the in-flow vector $\mathcal{F}_{i}^{in}$ can be considered as a given external in-flow.\\
	
\item[(iii)] If $C_i$ is a target component, we can write
\begin{equation}\label{TargetSystem} 
	\begin{cases}
	\partial_tX_{i} - D_i\Delta X_{i} = \mathcal{F}_{i}^{in} + A_iX_{i}, &x\in\Omega,\quad t>0,\\
	\partial_{\nu}X_{i} = 0, &x\in\partial\Omega, \quad t>0,\\
	X_{i}(x,0) = X_{i,0}(x), &x\in\Omega,
\end{cases}
\end{equation} 
where the reaction matrix $A_i$ and the in-flow $\mathcal{F}_{i}^{in}$ are defined in the same way as above in \eqref{SelfReacMatrix} and \eqref{InFlow}.
\end{itemize}
\medskip

By modifying the relative entropy method in Section \ref{weakly}, we obtain the
%
\begin{proof}[Proof of Theorem \ref{SourceTransmission}]
Since the ongoing outflow vanishes the mass of all source components and subsequently all transmission components, the corresponding equilibrium values are expected to be zero and the relative entropy method used for weakly reversible networks is not directly applicable here. We instead introduce a concept of "artificial equilibrium states with normalised mass" for these components, which allows to derive a quadratic entropy-like functional, which can be proved to decay exponentially. Due to their different dynamics, we have to distinguish the two cases: $C_i$ is a source component and $C_i$ is a transmission component.

The aim of the proof is to show that if $C_i$ is a source or a transmission component then for all $k = 1, \ldots, N_i$,
\begin{equation}\label{source_transmission}
	\|u_{L[i-1]+k}(t)\|_{L^2(\Omega)}^2 \leq K_ie^{-\lambda_i t}, \qquad \text{ for all } t\geq 0,
\end{equation}
for explicit constants $K_i>0$ and $\lambda_i>0$.
\medskip

In order to simplify the notation, we shall denote
\begin{equation}\label{newnotation}
v_k = u_{L[i-1]+k},\quad \text{and}\quad b_{k,\ell} = a_{L[i-1]+k, L[i-1]+\ell},\qquad \text{for all } 1\leq k, \ell \leq N_i. 
\end{equation}
Then, the concentration vector $X_i$ and the reaction matrix $A_i$ can be rewritten as 
\begin{equation*}
		X_i = (v_1, v_2, \ldots, v_{N_i})\qquad \text{ and } \qquad A_i = (b_{k,\ell})_{1\leq k, \ell \leq N_i}.
\end{equation*}
Note that the index $i$ for the component $C_i$ is fixed.\\
	
\noindent {\bf Case 1:} $C_i$ is a source component.
	
We recall the corresponding system from \eqref{SourceSystem}
\begin{equation}\label{SourceSystem_Recall}
	\begin{cases}
		\partial_tX_{i} - D_i\Delta X_{i} = A_iX_{i} - F_{i}^{out}X_{i}, &x\in\Omega, \qquad t>0,\\
		\partial_{\nu}X_{i} = 0, &x\in\partial\Omega, \qquad t>0,\\
		X_{i}(x,0) = X_{i,0}(x), &x\in\Omega.
	\end{cases}
\end{equation} 
We now introduce an artificial equilibrium state $X_{i,\infty} = (v_{1,\infty}, v_{2,\infty}\ldots, v_{N_i, \infty})^T$ with normalised mass to \eqref{SourceSystem_Recall}, which is defined as the solution of the system 
\begin{equation}\label{SourceEqui}
	\begin{cases}
		A_iX_{i,\infty} = 0,\\
		v_{1,\infty} + v_{2,\infty} + \ldots + v_{N_i,\infty} = 1.
	\end{cases}
\end{equation}
It follows from Lemma \ref{UniqueEqui} that there exists a unique positive solution $X_{i,\infty}$ to \eqref{SourceEqui}. 
Here we notice that $X_{i,\infty}$ balances all reactions within $C_i$ while the total mass contained in $X_{i,\infty}$ is normalised to one. 
\medskip

In the following we will study the evolution of the quadratic entropy-like functional
\begin{equation}\label{ReEn_Source}
	\mathcal{E}(X_{i}|X_{i,\infty}) = \sum_{k=1}^{N_i}\int_{\Omega}\frac{|v_{k}|^2}{v_{k,\infty}}dx.
\end{equation}
By similar calculations as in Lemma \ref{ExplicitEnDiss}, we obtain the time derivative of this quadratic functional
\begin{align}\label{EnDiss_Source} 
	\mathcal{D}(X_{i}|X_{i,\infty}) &= -\frac{d}{dt}\mathcal{E}(X_{i}|X_{i,\infty})\nonumber \\
	&= 2\sum_{k=1}^{N_i}d_{L[i-1]+k}\int_{\Omega}\frac{|\nabla v_{k}|^2}{v_{k,\infty}}dx\nonumber\\
	&\quad + \sum_{k,\ell=1;k<\ell}^{N_i}(b_{k,\ell}v_{\ell,\infty} + b_{\ell,k}v_{k,\infty})\int_{\Omega}\left(\frac{v_{k}}{v_{k, \infty}} - \frac{v_{\ell}}{v_{\ell, \infty}}\right)^2dx\nonumber\\
	&\quad + 2\sum_{k=1}^{N_i}f_{L[i-1]+k}\int_{\Omega}\frac{|v_k|^2}{v_{k,\infty}}dx.
\end{align}
We remark that since $C_i$ is a source component, there exists an index $k_0 \in \{1,2,\ldots,N_i\}$ 
such that the out-flow $f_{L[i-1]+k_0}>0$ is positive. 
Then, an estimate similar to \eqref{e11} gives for various constants $C$
\begin{align}
\mathcal{D}(X_{i}|X_{i,\infty})&\geq \xi\sum_{k,\ell=1;k<\ell}^{N_i}\int_{\Omega}\Bigl(\frac{v_{k}}{v_{k, \infty}} - \frac{v_{\ell}}{v_{\ell, \infty}}\Bigr)^{\!2}dx + 2f_{L[i-1]+k_0}\int_{\Omega}\frac{|v_{k_0}|^2}{v_{k_0,\infty}}\,dx\nonumber \\
&\geq {\color{black}\min\{\xi/2, f_{L[i-1]+k_0}/2N_i\}}\sum_{\ell=1;\ell\not=k_0}^{N_i}\int_{\Omega}\biggl[\Bigl(\frac{v_{\ell}}{v_{\ell, \infty}} - \frac{v_{k_0}}{v_{k_0, \infty}}\Bigr)^{\!2} + \frac{|v_{k_0}|^2}{v_{k_0,\infty}}\biggr]dx\nonumber\\
&\quad\, + f_{L[i-1]+k_0}\int_{\Omega}\frac{|v_{k_0}|^2}{v_{k_0,\infty}}dx\nonumber\\
&\geq \lambda_i\sum_{\ell=1}^{N_i}\int_{\Omega}\frac{|v_{\ell}|^2}{v_{\ell,\infty}}\,dx
= \lambda_i\, \mathcal{E}(X_{i}|X_{i,\infty})\label{EnEnDissEst_Source} 
\end{align}
{\color{black}with $\lambda_i = {\color{black}\min\{\xi/4, f_{L[i-1]+k_0}/4N_i\}}$}. It follows that
\begin{equation*}
	\frac{d}{dt}\mathcal{E}(X_{i}|X_{i,\infty}) = -\mathcal{D}(X_{i}|X_{i,\infty})\leq -\lambda_i\, \mathcal{E}(X_{i}|X_{i,\infty}),
\end{equation*}
and thus
\begin{equation*}
	\sum_{k=1}^{N_i}\int_{\Omega}\frac{|v_k(t)|^2}{v_{k,\infty}}dx = \mathcal{E}(X_{i}(t)|X_{i,\infty}) \leq e^{-\lambda_i t}\mathcal{E}(X_{i,0}|X_{i,\infty}),
\end{equation*}
or equivalently
\begin{equation*}
	\|u_{L[i-1]+k}(t)\|^2 \leq e^{-\lambda_i t}\mathcal{E}(X_{i,0}|X_{i,\infty})\max_{1\le i\le N_i}\{v_{i,\infty}\} \qquad \text{ for all } t>0, \; \text{ for all } k=1,2,\ldots,N_i,
\end{equation*}
{\color{black}which proves \eqref{source_transmission} with $K_i = \mathcal{E}(X_{i,0}|X_{i,\infty})\max_{1\le i\le N_i}\{v_{i,\infty}\}$} in the case $C_i$ is a source component.
\medskip		
	
\noindent {\bf Case 2:} $C_i$ is a transmission component.

By recalling that the components $C_i$ are topologically ordered, we can assume without loss of generality  
that $u_\ell$, with $\ell=1,2,\ldots,L[i-1]$, obeys the following exponential decay
\begin{equation}\label{Decay}
	\|u_\ell(t)\|^2 \leq K^*e^{-\lambda^* t}, \qquad \ell=1,2,\ldots,L[i-1], \quad\text{ for all } t>0.
\end{equation}
for  {\color{black}$0<\lambda^* = \min\limits_{1\leq k\leq i-1}\lambda_k$ and $K^* = \max\limits_{1\leq k\leq i-1}K_i$}. We also recall the system for $C_i$,
\begin{equation}\label{TransmissionSystem_ReCall}
	\begin{cases}
		\partial_tX_{i} - D_i\Delta X_{i} = \mathcal{F}_{i}^{in} + A_iX_{i} - F_{i}^{out}X_{i}, &x\in\Omega,\quad t>0,\\
		\partial_{\nu}X_{i} = 0, &x\in\partial\Omega, \quad t>0,\\
		X_{i}(x,0) = X_{i,0}(x), &x\in\Omega,
	\end{cases}
\end{equation} 
with $\mathcal{F}_{i}^{in}$ is defined as \eqref{InFlow}. Denote by $X_{i,\infty} = (v_{1,\infty}, \ldots, v_{N_i,\infty})^T$ the artificial equilibrium state of \eqref{TransmissionSystem_ReCall}, which is the unique positive solution to
\begin{equation}\label{TransmissionEqui} 
	\begin{cases}
		A_iX_{i,\infty} = 0,\\
		v_{1,\infty} + v_{2,\infty} + \ldots + v_{N_i,\infty} = 1.
	\end{cases}
\end{equation}
Again, we can compute the time derivative of 
\begin{equation}\label{ReEn_Transmission} 
	\mathcal{E}(X_{i}|X_{i,\infty}) = \sum_{k=1}^{N_i}\int_{\Omega}\frac{|v_k|^2}{v_{k,\infty}}dx
\end{equation} 
as
\begin{align}\label{EnDiss_Transmission} 
\mathcal{D}(X_{i}|X_{i,\infty}) &= -\frac{d}{dt}\mathcal{E}(X_{i}, X_{i,\infty})\nonumber\\
&= 2\sum_{i=1}^{N_i}d_{L[i-1]+k}\int_{\Omega}\frac{|\nabla v_k|^2}{v_{k,\infty}}dx + \sum_{k,\ell=1;k<\ell}^{N_i}(b_{k,\ell}v_{\ell,\infty} + b_{\ell,k}v_{k,\infty})\int_{\Omega}\left(\frac{v_{k}}{v_{k, \infty}} - \frac{v_{\ell}}{v_{\ell, \infty}}\right)^2dx\nonumber\\
	&\quad + 2\sum_{k=1}^{N_i}f_{L[i-1]+k}\int_{\Omega}\frac{|v_k|^2}{v_{k,\infty}}dx-2\sum_{k=1}^{N_i}\int_{\Omega}\biggl(\frac{v_k}{v_{k,\infty}}\sum_{\ell=1}^{L[i-1]}a_{L[i-1]+k,\ell}\,u_{\ell}\biggr)dx.
\end{align}
Because $C_i$ is a transmission component, there exists an index $k_0 \in \{1,\ldots, N_i\}$ such that $f_{L[i-1]+k_0}>0$ is positive. In comparison to \eqref{EnDiss_Source}, the dissipation $\mathcal{D}(X_i|X_{i,\infty})$ in \eqref{EnDiss_Transmission}  has the additional term
\begin{equation*}
	- 2\sum_{k=1}^{N_i}\int_{\Omega}\biggl(\frac{v_k}{v_{k,\infty}}\sum_{\ell=1}^{L[i-1]}a_{L[i-1]+k,\ell}\,u_{\ell}\biggr)dx
\end{equation*} 
to be estimated. Thanks to the decay \eqref{Decay} of $u_{\ell}$, we can estimate
\begin{align}\label{InFlow_Estimate} 
\left|2\sum_{k=1}^{N_i}\int_{\Omega}\biggl(\frac{v_k}{v_{k,\infty}}\sum_{\ell=1}^{L[i-1]}a_{L[i-1]+k,\ell}\,u_{\ell}\biggr)dx\right|
&\leq 2\sum_{k=1}^{N_i}\sum_{\ell=1}^{L[i-1]}a_{L[i-1]+k,\ell}\int_{\Omega}\left|\frac{v_{k}}{v_{k,\infty}}u_{\ell}\right|dx \nonumber\\
&\leq f_{L[i-1]+k_0}\sum_{k=1}^{N_i}\int_{\Omega}\frac{|v_k|^2}{v_{k,\infty}}dx + \kappa\sum_{\ell=1}^{L[i-1]}\|u_{\ell}\|^2\nonumber\\
&\leq f_{L[i-1]+k_0}\sum_{k=1}^{N_i}\int_{\Omega}\frac{|v_k|^2}{v_{k,\infty}}dx + \kappa K^*e^{-\lambda^* t}
\end{align}
{\color{black}with $\kappa = N_iL[i-1]\max\limits_{i<j}\{a_{ij}^2\}/(f_{L[i-1]+k_0}\min\limits_{k}\{v_{k,\infty}\})$}.
Then, with the help of \eqref{InFlow_Estimate}, we estimate
\begin{equation*}
	\begin{aligned}
		\mathcal{D}(X_{i}|X_{i,\infty}) \ge
		\sum_{k,\ell=1;k<\ell}^{N_i}(b_{k,\ell}v_{\ell,\infty} + b_{\ell,k}v_{k,\infty})\int_{\Omega}\left(\frac{v_{k}}{v_{k, \infty}} - \frac{v_{\ell}}{v_{\ell, \infty}}\right)^2dx
			+ f_{L[i-1]+k_0}\int_{\Omega}\frac{|v_{k_0}|^2}{v_{k_0,\infty}}dx - \kappa K^* e^{-\lambda^* t},
	\end{aligned}
\end{equation*} 
and similarly to \eqref{EnEnDissEst_Source}, we obtain for ${\color{black}\overline{\lambda} = \min\{\xi/4, f_{L[i-1]+k_0}/4N_i\}},$
\begin{equation}\label{EnEnDissEst_Transmission}
		\mathcal{D}(X_{i}|X_{i,\infty}) \geq \overline{\lambda}\,\mathcal{E}(X_i|X_{i,\infty}) - \kappa K^* e^{-\lambda^* t}.
\end{equation}
From \eqref{EnEnDissEst_Transmission}, we can use the classic Gronwall lemma to have
\begin{equation*}
			\mathcal{E}(X_{i}(t)|X_{i,\infty}) \leq K_ie^{-\lambda_it},
\end{equation*}
{\color{black}with $\lambda_i = \min\{\overline{\lambda}, \lambda^*\}$ and $K_i = 2\max\{\mathcal{E}(X_{i,0}|X_{i,\infty}), \kappa K^* \}$}, which ends the proof in the case that $C_i$ is a transmission component.
\end{proof}

For a target component, we need to define its corresponding equilibrium state. This equilibrium state balances the 
reactions within the component and has as total mass the sum of the initial total mass of the target component 
plus the total "injected mass" from the other components. In general, 
the injected mass will not be given explicitly but depend on the 
time evolution of the influences species prior to $C_i$ in terms of the topological order. 

\begin{lemma}[Equilibrium state of target components]\label{FinalStateTarget}\hfil\\
For each target component $C_i$, if{
\begin{equation}\label{positivemass}
	\sum\limits_{k=1}^{N_i}\overline{u}_{L[i-1]+k,0} + \sum\limits_{k=1}^{N_i}\sum\limits_{\ell=1}^{L[i-1]}a_{L[i-1]+k,\ell}\int\limits_{0}^{+\infty}\overline{u_{\ell}}(s)ds > 0
\end{equation}}
holds, then there exists a unique positive equilibrium state $X_{i,\infty} = (v_{1,\infty}, v_{2,\infty}, \ldots, v_{N_i,\infty})$ satisfying
\begin{equation}\label{EquiTarget}
	\begin{cases}
		A_iX_{i,\infty} = 0,\\
		\sum\limits_{k=1}^{N_i}v_{k,\infty} = \sum\limits_{k=1}^{N_i}\overline{u}_{L[i-1]+k,0} + \sum\limits_{k=1}^{N_i}\sum\limits_{\ell=1}^{L[i-1]}a_{L[i-1]+k,\ell}\int\limits_{0}^{+\infty}\overline{u_{\ell}}(s)ds.
	\end{cases}
\end{equation}
{Otherwise, if the sum \eqref{positivemass} should be zero, then the initial and the total injected mass 
into the target component $C_i$ is zero and the concentrations of the target component $C_i$ remain zero of all times.}
\end{lemma}
\begin{proof}
By \eqref{Decay} we have for all $\ell = 1, 2, \ldots, L[i-1]$ that $\|u_{\ell}(t)\|^2 \leq K^* e^{-\lambda^*t}$. 
Thus, Jensen's inequality yields
\begin{equation}\label{Jensen}
	\int_{0}^{+\infty}\overline{u_{\ell}}(s)ds \leq \int_{0}^{+\infty}\|u(s)\|^{1/2}_{L^2(\Omega)}ds \leq K^*\int_{0}^{+\infty}e^{-\frac{\lambda^*}{2}s}ds = \frac{2K^*}{\lambda^*}
\end{equation}
and the right hand side of the second equation in \eqref{EquiTarget} is finite. 
Therefore, the existence of a unique $X_{i,\infty}$ satisfying \eqref{EquiTarget} follows from Lemma \ref{UniqueEqui}.
\end{proof}
{
\begin{remark}
{The positive sign in assumption \eqref{positivemass} ensures that either initially or during the ongoing reactions 
positive mass is present/injected into the component $C_i$.} 
When this assumption does not hold, 
then the target component does not possess a positive equilibrium and all of its concentrations remain zero for all times. 
For example, consider the network
	\begin{center}\scalebox{1}[1]{
		\begin{tikzpicture}
			\node (c) at (2,2) {$S_1$}  node (d) at (2,0) {$S_2$}  node (e) at (4,0) {$S_4$}   node(f) at(4,2){$S_3$};
			\draw[arrows=->] ([xshift =0.5mm,yshift=-0.5mm]d.east) -- node [below] {\scalebox{.8}[.8]{$a_{42}$}} ([xshift=-0.5mm,yshift=-0.5mm]e.west);
			\draw[arrows=->] ([xshift =0.5mm]c) -- node [above] {\scalebox{.8}[.8]{$a_{31}$}} ([xshift=-0.5mm]f);
			\draw[arrows=->] ([yshift =-0.5mm]c) -- node [right] {\scalebox{.8}[.8]{$a_{21}$}} ([yshift=0.5mm]d);
		\end{tikzpicture}} 
	\end{center}	
when the initial data of all species are zero except $S_3$. In this case, the target component $\{S_4\}$ will not ever receive any mass, and thus remains zero for all $t>0$.
\end{remark}}

We now begin the 
\begin{proof}[Proof of Theorem \ref{Target}]

With the notations introduced in \eqref{L} and Lemma \ref{FinalStateTarget}, we identify the indexes in the statement of Theorem \ref{Target} as $i_k = L[i-1]+k$ and the equilibrium state $u_{i_k,\infty} = v_{k,\infty}$ for $k= 1,\ldots, N_i$. The aim now is to prove for all $k=1,\ldots, N_i$,
\begin{equation*}
	\|v_k(t) - v_{k,\infty}\|_{L^2(\Omega)}^2 \leq K_ie^{-\lambda_i t} \quad \text{ for all } t\geq 0
\end{equation*}
for some explicit constants $K_i>0$ and $\lambda_i>0$.

\medskip
We recall the system for a target component $C_i$,
\begin{equation}\label{TargetSystem_Recall}
\begin{cases}
		\partial_tX_{i} - D_i\Delta X_{i} = \mathcal{F}_{i}^{in} + A_iX_{i}, &x\in\Omega,\quad t>0,\\
		\partial_{\nu}X_{i} = 0, &x\in\partial\Omega, \quad t>0,\\
		X_{i}(x,0) = X_{i,0}(x), &x\in\Omega,
\end{cases}
\end{equation}
where
\begin{equation*} 
			\mathcal{F}_{i}^{in} = \begin{pmatrix}
					z_{L[i-1]+1}\\
					z_{L[i-1]+2}\\
					\ldots \\
					z_{L[i]}\\
			\end{pmatrix}
	\quad \text{ with } \quad z_{L[i-1]+\ell} = \sum_{k=1}^{L[i-1]}a_{L[i-1]+\ell, k}\,u_k. 
\end{equation*}
	
Note that the total mass of $C_i$ is not conserved but increases in time due to the in-flow vector $\mathcal{F}^{in}_i$. 
To compute the total mass of $C_i$ at a time $t>0$, we sum up all the equations of 
\eqref{TargetSystem_Recall} then integrating over $\Omega$,
\begin{equation*}
		\frac{d}{dt}\sum_{k=1}^{N_i}\overline{u}_{L[i-1]+k}(t) = \sum_{k=1}^{N_i}\overline{z}_{L[i-1]+k}(t) = \sum_{k=1}^{N_i}\sum_{\ell=1}^{L[i-1]}a_{L[i-1]+k,\ell}\,\overline{u_{\ell}}(t)
\end{equation*}
thanks to the homogeneous Neumann boundary condition and the fact that $(1,\ldots,1)^T$ is a left eigenvector with eigenvalue zero of $A_i$ since $A_i$ is a reaction matrix. Thus, we have
	\begin{equation}\label{massTar}
				\sum_{k=1}^{N_i}\overline{u}_{L[i-1]+k}(t) = \sum_{k=1}^{N_i}\overline{u}_{L[i-1]+k,0} + \sum_{k=1}^{N_i}\sum_{\ell=1}^{L[i-1]}a_{L[i-1]+k,\ell}\int_{0}^{t}\overline{u_{\ell}}(s)ds.
	\end{equation}
{Given that the right hand side of \eqref{massTar} should be zero for all times $t>0$, then 
$\overline{u}_{L[i-1]+k}(t)=0$ for all $k=1,\ldots,N_i$ and for all $t>0$ and $X_{i,\infty}=0$ and the statement of the Theorem 
holds trivially.}

{Otherwise, if the right hand side of \eqref{massTar} is positive for some time $t>0$, then assumption 
\eqref{positivemass} is satisfied an  $X_{i,\infty}$ is a positive equilibrium.} 
Recalling the change of notation $v_{k} = u_{L[i-1] + k}$ in \eqref{newnotation}, we denote by
$$w_k(t) = v_k(t) - v_{k,\infty} = u_{L[i-1]+k}(t) - v_{k,\infty}$$
the distance from $u_{L[i-1]+k}$ to its corresponding equilibrium state for all $k=1,2,\ldots,N_i$. It implies that $(w_k)_{k=1,\ldots,N_i}$ solves the system \eqref{TargetSystem_Recall} subject to the initial data $w_{k,0} = u_{L[i-1]+k,0} - v_{k,\infty}$ for all $k=1,2,\ldots, N_i$. We define $W_i = (w_1, w_2, \ldots, w_{N_i})$ and consider the relative entropy-like functional
\begin{equation}\label{EntropyTarget}
\mathcal{E}(W_{i}|X_{i,\infty}) = \sum_{k=1}^{N_i}\int_{\Omega}\frac{|w_k|^2}{v_{k,\infty}}dx
= \sum_{k=1}^{N_i}\int_{\Omega}\frac{|w_k - \overline{w_k}|^2}{v_{k,\infty}}dx + \sum_{k=1}^{N_i}\frac{\overline{w_k}^2}{v_{k,\infty}}
=: \mathcal{E}_1 + \mathcal{E}_2.
\end{equation}
By using again arguments of Lemma \ref{ExplicitEnDiss}, we calculate the entropy dissipation
\begin{align}\label{EnDissTarget}
	\mathcal{D}(W_{i}|X_{i,\infty}) &= -\frac{d}{dt}\mathcal{E}(W_{i}|X_{i,\infty})\nonumber\\
&= 2\sum_{i=1}^{N_i}d_{L[i-1]+k}\int_{\Omega}\frac{|\nabla w_k|^2}{v_{k,\infty}}dx  + \sum_{k,\ell=1;k<\ell}^{N_i}(b_{k,\ell}v_{\ell,\infty} + b_{\ell,k}v_{k,\infty})\int_{\Omega}\left(\frac{w_{k}}{v_{k, \infty}} - \frac{w_{\ell}}{v_{\ell, \infty}}\right)^2dx\nonumber\\
&\quad - 2\sum_{k=1}^{N_i}\sum_{\ell=1}^{L[i-1]}a_{L[i-1]+k,\ell}\int_{\Omega}\frac{w_k}{v_{k,\infty}}\,u_{\ell}\,dx
\end{align}
For the last term of \eqref{EnDissTarget}, we estimate
\begin{align}
\biggl|2&\sum_{k=1}^{N_i}\sum_{\ell=1}^{L[i-1]}a_{L[i-1]+k,\ell}\int_{\Omega}\frac{w_k}{v_{k,\infty}}\,u_{\ell}\,dx\biggr|\nonumber \\
&\leq 2\sum_{k=1}^{N_i}\sum_{\ell=1}^{L[i-1]}a_{L[i-1]+k,\ell}\int_{\Omega}\frac{|w_k - \overline{w_k}|}{v_{k,\infty}}|u_{\ell}|dx\
	+ 2\sum_{k=1}^{N_i}\sum_{\ell=1}^{L[i-1]}a_{L[i-1]+k,\ell}\,\frac{|\overline{w_k}|}{v_{k,\infty}}|\overline{u_{\ell}}|\nonumber \\
&\leq C_{P}\sum_{k=1}^{N_i}d_{L[i-1]+k}\int_{\Omega}\frac{|w_k - \overline{w_k}|^2}{v_{k,\infty}}dx +\kappa_1\sum_{\ell=1}^{L[i-1]}\|u_{\ell}\|^2+ \kappa_2\sum_{k=1}^{N_i}\frac{\overline{w_k}^2}{v_{k,\infty}} + \kappa_3\sum_{\ell=1}^{L[i-1]}\overline{u_{\ell}}^2\nonumber \\
&\leq \sum_{k=1}^{N_i}d_{L[i-1]+k}\int_{\Omega}\frac{|\nabla w_k|^2}{v_{k,\infty}}dx + \kappa_2\sum_{k=1}^{N_i}\frac{\overline{w_k}^2}{v_{k,\infty}}+ (\kappa_1+\kappa_3)K^*e^{-\lambda^*t},\label{LastTerm}
\end{align}
{\color{black} with
	\[
		\kappa_1 = \frac{N_iL[i-1]\max\limits_{i<j}\{a_{ij}^2\}}{C_P\min\limits_{k}\{d_{L[i-1]+k}v_{k,\infty}\}}, \quad \kappa_2 = \frac 12\xi \max\limits_{k}\{v_{k,\infty}\},\quad \kappa_3 = \frac{N_iL[i-1]\max\limits_{i<j}\{a_{ij}^2\}}{\kappa_2v_{k,\infty}},
	\]
where $\kappa_2$ is chosen in such a way that the last step of the below estimate \eqref{ee1_2} is fulfilled}, and we have used $\|u_{\ell}(t)\|^2 \leq K^*e^{-\lambda^*t}$ for all $\ell = 1,\ldots, L[i-1]$ in the last estimate. By inserting \eqref{LastTerm} into \eqref{EnDissTarget}, we obtain
\begin{align}
\mathcal{D}(W_{i}|X_{i,\infty}) &\geq \sum_{i=1}^{N_i}d_{L[i-1]+k}\int_{\Omega}\frac{|\nabla w_k|^2}{v_{k,\infty}}dx+ \sum_{k,\ell=1;k<\ell}^{N_i}(b_{k,\ell}v_{\ell,\infty} + b_{\ell,k}v_{k,\infty})\left(\frac{\overline{w_{k}}}{v_{k, \infty}}- \frac{\overline{w_{\ell}}}{v_{\ell, \infty}}\right)^2\nonumber\\
  &\quad - \kappa_2\sum_{k=1}^{N_i}\frac{\overline{w_k}^2}{v_{k,\infty}}- (\kappa_1+\kappa_3) K^*e^{-\lambda^* t} =: \mathcal D_1 + \mathcal{D}_2\label{ee1}
\end{align}
where $\mathcal{D}_1$ is the term containing the gradients and $\mathcal{D}_2$ is the rest of the right hand side. It follows from Poincar\'e's inequality  that
	\begin{equation}\label{ee1_1}
			\begin{aligned}
					\mathcal D_1 \geq \sum_{i=1}^{N_i}d_{L[i-1]+k}\int_{\Omega}\frac{|\nabla w_k|^2}{v_{k,\infty}}dx\geq C_{P}\sum_{i=1}^{N_i}d_{L[i-1]+k}\int_{\Omega}\frac{|w_k-\overline{w_k}|^2}{v_{k,\infty}}dx \geq \kappa_4\mathcal{E}_1
			\end{aligned}
	\end{equation}
{\color{black}with $\kappa_4 = C_P\min\limits_{k}\{d_{L[i-1]+k}\}$}.
To control $\mathcal{E}_2$, we use arguments similar to Step 3 in the proof of Lemma \ref{EEDEstimate}. First, by using \eqref{massTar}, we have the total mass of $(w_k)_{1\leq k\leq N_i}$ is computed as,
\begin{align}
\sum_{k=1}^{N_i}\overline{w_k}(t) = \sum_{k=1}^{N_i}\overline{u}_{L[i-1]+k}(t) - \sum_{k=1}^{N_i}v_{k,\infty}\nonumber 
&= \sum_{k=1}^{N_i}\overline{u}_{L[i-1]+k,0} + \sum_{k=1}^{N_i}\sum_{\ell=1}^{L[i-1]}a_{L[i-1]+k,\ell}\int_{0}^{t}\overline{u_{\ell}}(s)ds\nonumber \\
&\quad - \sum\limits_{k=1}^{N_i}\overline{u}_{L[i-1]+k,0} - \sum\limits_{k=1}^{N_i}\sum\limits_{\ell=1}^{L[i-1]}a_{L[i-1]+k,\ell}\int_{0}^{+\infty}\overline{u_{\ell}}(s)ds\nonumber\\
&= - \sum\limits_{k=1}^{N_i}\sum\limits_{\ell=1}^{L[i-1]}a_{L[i-1]+k,\ell}\int_{t}^{+\infty}\overline{u_{\ell}}(s)ds=: -\delta(t).\label{TargetInitMass}
\end{align}
Hence, 
\begin{equation}\label{kaka}
-2\sum_{k,\ell=1;k<\ell}^{N_i}\overline{w_k}\,\overline{w_{\ell}} = -\sum_{k,\ell=1;k\neq \ell}^{N_i}\overline{w_k}\,\overline{w_{\ell} }
= \sum_{k=1}^{N_i}\overline{w_k}^2
-\sum_{k,\ell=1}^{N_i}\overline{w_k}\,\overline{w_{\ell}}
=\sum_{k=1}^{N_i}\overline{w_k}^2 - \delta^2(t).
\end{equation}
By using \eqref{e11} and \eqref{kaka}, we estimate
\begin{align}
\mathcal D_2 &\geq \xi\sum_{k,\ell=1; k<\ell}^{N_i}\left(\frac{\overline{w_k}}{v_{k,\infty}} - \frac{\overline{w_\ell}}{v_{\ell,\infty}}\right)^2 - \kappa_2\sum_{k=1}^{N_i}\frac{\overline{w_k}^2}{v_{k,\infty}} - (\kappa_1+\kappa_3)K^*e^{-\lambda^*t}\nonumber\\
&\geq -2\xi\max\limits_{k<\ell}\{v_{k,\infty}v_{\ell,\infty}\}\sum_{k,\ell=1;k<\ell}^{N_i}\overline{w_k}\,\overline{w_{\ell}} - \kappa_2\sum_{k=1}^{N_i}\frac{\overline{w_k}^2}{v_{k,\infty}} - (\kappa_1+\kappa_3)K^*e^{-\lambda^*t}\nonumber\\
&= \xi\max\limits_{k<\ell}\{v_{k,\infty}v_{\ell,\infty}\}\left(\sum_{k=1}^{N_i}\overline{w_k}^2 - \delta^2\right) - \kappa_2\sum_{k=1}^{N_i}\frac{\overline{w_k}^2}{v_{k,\infty}} - (\kappa_1+\kappa_3)K^*e^{-\lambda^*t}\nonumber \\
&\geq \frac 12\xi \max\limits_{k}\{v_{k,\infty}\}\sum_{k=1}^{N_i}\frac{\overline{w_k}^2}{v_{k,\infty}} - \xi \max\limits_{k<\ell}\{v_{k,\infty}v_{\ell,\infty}\}\delta^2 - (\kappa_1+\kappa_3)K^*e^{-\lambda^* t}\label{ee1_2} 
\end{align}
for $\varepsilon>0$ is sufficiently small. It follows from \eqref{TargetInitMass} and $\overline{u_\ell} \leq{\|u_{\ell}\|} \leq \sqrt{K^*}e^{-\lambda^*t/2}$ that
\begin{equation*}
\delta^2 \leq N_iL[i-1]\max\limits_{i<j}\{a_{ij}^2\}\sum_{\ell=1}^{L[i-1]}\left(\int_{t}^{+\infty}\overline{u_{\ell}}(s)ds\right)^2
\leq  \kappa_4e^{-\lambda^*t}
\end{equation*}
with {\color{black}$\kappa_4 = 4K^*N_iL[i-1]^2\max\limits_{i<j}\{a_{ij}^2\}(\lambda^*)^{-2}$}. Hence, \eqref{ee1_2} implies that
\begin{equation}\label{keke}
	\mathcal D_2 \geq \frac 12\xi \max\limits_{k}\{v_{k,\infty}\}\sum_{k=1}^{N_i}\frac{\overline{w_k}^2}{v_{k,\infty}} - \max\{\kappa_4\xi\max\limits_{k<\ell}\{v_{k,\infty}v_{\ell,\infty}\}, (\kappa_1+\kappa_3) K^*\}e^{-\lambda^*t} = \kappa_5\mathcal{E}_2 - \kappa_6e^{-\lambda^*t}.
\end{equation}
Combining \eqref{keke} and \eqref{ee1_1} yields
\begin{equation}\label{ee2} 
			\mathcal{D}(W_{i}| X_{i,\infty}) \geq \min\{\kappa_4, \kappa_5\}\mathcal{E}(W_{i}| X_{i,\infty}) -  \kappa_6e^{-\lambda^*t}.
\end{equation}
Therefore, by applying a classic Gronwall lemma,
\begin{equation}\label{ee4} 
		\mathcal{E}(W_{i}(t)|X_{i,\infty}) \leq K_ie^{-\lambda_i t} \qquad \text{ for all } t\geq 0
\end{equation}
{\color{black}with $\lambda_i = \min\{\kappa_4, \kappa_5,\lambda^* \}$ and $K_i = 2\max\{\mathcal{E}(X_{i,0}|X_{i,\infty}), \kappa_6\}$}. This completes the proof of the Theorem.
\end{proof}

\vskip 0.5cm
\noindent{\bf Acknowledgements.} We would like to thank the anonymous referee for his valuable comments and suggestions, which improve the presentation of the paper.

The third author is supported by International Research Training Group IGDK 1754. This work has partially been supported by NAWI Graz.



\begin{thebibliography}{00} 
\bibitem[AAS]{AAS} F. Achleitner, A. Arnold, D. St\"urzer, \textit{Large-time behavior in non-symmetric Fokker-Planck equations}, Riv. Mat. Univ. Parma, \textbf{6} no.1 (2015) pp. 1--68.

\bibitem[And]{Ar11} D.F. Anderson, \textit{A proof of the Global Attractor Conjecture in the single linkage class case}, SIAM J. Appl. Math., \textbf{71} (2011) pp. 1487--1508.	

\bibitem[AMTU]{AMTU} A. Arnold, P. Markowich, G. Toscani, A. Unterreiter, \textit{On convex Sobolev inequalities and the rate of convergence to equilibrium for Fokker-Planck type equations}, Comm. PDE \textbf{26} no. 1-2 (2001) pp. 43--100. 

\bibitem[BD06]{BD} M.~Bisi, L.~Desvillettes.  \textit{From reactive Boltzmann 
equations to reaction-diffusion systems.} J. Stat. Phys. \textbf{125} (2006), 
no. 1, pp. 249--280.

\bibitem[BJG08]{BJG08} J. Bang-Jensen and G.Z.Gregory, \underline{Digraphs: theory, algorithms and applications}, Springer Science \& Business Media, 2008.

\bibitem[BRetal]{Bok03} Bokinsky, G., Rueda, D., Misra, V.K., Rhodes, M.M., Gordus, A., Babcock, H.P., Walter, N.G., Zhuang, X., \textit{Single-molecule transition-state analysis of RNA folding}. Proc. Natl. Acad. Sci. USA \textbf{100} (2003), pp. 9302--9307.

\bibitem[CJMTU]{CJMTU} J. Carrillo, A. J\"ungel, P. Markowich, G. Toscani, 
A. Unterreiter. \textit{Entropy dissipation methods for degenerate parabolic 
problems and generalized Sobolev inequalities.}  Monatsh. Math.  
\textit{133}   no. 1 (2001),  pp. 1--82.
	
\bibitem[CDSS]{Cra09} G. Craciun, A. Dichkenstein, A. Shiu and B. Sturmfels, \textit{Toric dynamical systems}, J. Symb. Comput. \textbf{44} (2009) pp. 1551--1565.
	
\bibitem[DF06]{DeFe06} L. Desvillettes, K. Fellner, \textit{Exponential decay toward equilibrium via entropy methods for reaction--diffusion equations}, J. Math. Anal. Appl., \textbf{319} (2006), pp. 157--176.
	
\bibitem[DF07]{DeFe_Con} L. Desvillettes, K. Fellner, \textit{Entropy methods for reaction--diffusion systems}, Discrete Contin. Dyn. Syst. Issue Special (2007) pp. 304--312.
	
\bibitem[DF08]{DeFe08} L. Desvillettes, K. Fellner, \textit{Entropy methods for reaction--diffusion equations: slowly growing a-priori bounds}, Rev. Mat. Iberoamericana \textbf{24} (2008), pp. 407--431.

\bibitem[DF15]{DeFe15} L. Desvillettes, K. Fellner, \textit{Exponential Convergence to Equilibrium for a Nonlinear Reaction-Diffusion Systems Arising in Reversible Chemistry}, System Modelling and Optimization, IFIP AICT, \textbf{443} (2014) 96--104.

\bibitem[DFM]{DFM} M. Di Francesco, K. Fellner, P. Markowich, 
\textit{The entropy dissipation method for inhomogeneous reaction--diffusion systems}, Proc. Royal Soc. A \textbf{464} (2008) pp. 3272--3300.

\bibitem[DFT]{DFT} L. Desvillettes, K. Fellner, B.Q. Tang, \textit{Trend to equilibrium for reaction-diffusion systems arising from complex balanced chemical reaction networks}, arXiv:1604.04536.
	
\bibitem[DV01]{DVinhom1} L. Desvillettes, C. Villani, \textit{On the trend 
to global equilibrium in spatially inhomogeneous entropy-dissipating 
systems: the linear Fokker-Planck equation.} Comm. Pure Appl. Math. 
\textbf{54}, no. 1 (2001), pp. 1--42.
	
\bibitem[DV05]{DVinhom2} L. Desvillettes, C. Villani, \textit{On the trend 
to global equilibrium for spatially inhomogeneous kinetic systems: 
the Boltzmann equation.} Inventiones Mathematicae. \textbf{159}, no. 2, 
(2005) pp. 245--316.

\bibitem[DMS]{DMS} J. Dolbeault, C. Mouhot, C. Schmeiser,  \textit{Hypocoercivity for linear kinetic equations conserving mass}, Trans. Amer. Math. Soc, electronically published on February 3, 2015, DOI: http://dx.doi.org/10.1090/S0002-9947-2015-06012-7.
	
\bibitem[Fei79]{Fe79} M. Feinberg, \textit{Lectures on Chemical Reaction Networks}, available online at \url{http://www.crnt.osu.edu/LecturesOnReactionNetworks}, 1979.
	
\bibitem[Fei87]{Fe87} M. Feinberg, \textit{Chemical reaction network structure and the stability of complex isothermal reactors. I. The deficiency zero and deficiency one theorems}, Chem. Eng. Sci., \textbf{42} (1987), pp. 2229--2268.


\bibitem[FH]{FeHo74} M. Feinberg and F. J. M. Horn, \textit{Dynamics of open chemical systems and the algebraic structure of the underlying reaction network}, Chem. Eng. Sci., \textbf{29} (1974), pp. 775--787.

\bibitem[FLT]{BaFeEv14} K. Fellner, E. Latos and B.Q. Tang, \textit{Well-posedness and exponential equilibration of a volume-surface reaction-diffusion system with nonlinear boundary coupling}, arXiv:1404.2809.



\bibitem[FJ16]{FJ16}  J. Fontbona, B. Jourdain, {\it A trajectorial interpretation of the dissipations of entropy and Fisher information for stochastic differential equations} to appear in Annals of Probability. 

\bibitem[GGH]{GGH96} A. Glitzky, K. Gr\"oger, R. H\"unlich, \textit{Free
energy and dissipation rate for reaction-diffusion processes of
electrically charged species.} Appl. Anal. \textbf{60}, no. 3-4 (1996),
pp. 201--217.

\bibitem[GH]{GH97} A. Glitzky, R. H\"unlich, \textit{Energetic estimates and 
asymptotics for electro-reaction-diffusion systems.} Z. Angew. Math. Mech. 
\textbf{77} (1997), pp. 823--832.	

\bibitem[GLO]{Che05} C. Gadgil, C.H. Lee and H.G. Othmer, \textit{A stochastic analysis of first-oder reaction networks}, Bull. Math. Biology \textbf{67} (2005) pp. 901--946.
	

\bibitem[Gop]{Man} M. Gopalkrishnan, \textit{On the Lyapunov function for complex-balanced mass-action systems}, arXiv:1312.3043.

\bibitem[Gr\"o]{Gro92} K. Gr\"oger, \textit{Free energy estimates and
asymptotic behaviour of reaction-diffusion processes.} Preprint 20,
Institut f\"ur Angewandte Analysis und Stochastik, Berlin, 1992.

\bibitem[HJ72]{HoJa72} F.J.M. Horn and R. Jackson, \textit{General mass action kinetics}, Arch. Rational Mech. Anal., \textbf{47} (1972), pp. 81--116.

\bibitem[Hor72]{Ho72} F.J.M. Horn, \textit{Necessary and sufficient conditions for complex balancing in chemical kinetics}, Arch. Rational Mech. Anal., \textbf{49} (1972), pp. 172--186.

\bibitem[Hor74]{Ho74} F.J.M. Horn, \textit{The dynamics of open reaction systems}, in SIAM-AMS Proceedings, Vol. VIII, SIAM, Philadelphia, 1974, pp. 125--137.


\bibitem[MGetal]{Mayor03} Mayor, U., Guydosh, N.R., Johnson, C.M., Grossmann, J.G., Sato, S., Jas, G.S., Freund, S.M., Alonso,	D.O., Daggett, V., Fersht, A.R., \textit{The complete folding pathway of a protein from nanoseconds to microseconds}, Nature \textbf{421} (2003), pp. 863--867.

\bibitem[MHM]{MiHaMa14} A. Mielke, J. Haskovec, P. Markowich, \textit{On uniform decay of the entropy for reaction-diffusion systems}, J. Dynam. Differential Equations, \textbf{27} (2015) 897--928.

\bibitem[Min88]{Min88} H. Minc, \underline{Nonnegative matrices}, Wiley Interscience in Discrete Mathematics and Optimization, John Wiley 1988.

\bibitem[MiSi]{MiSi}M. Mincheva and D. Siegel, \textit{Stability of mass action reaction--diffusion systems}, Nonlinear Analysis TMA {\bf 56} (2004) pp.  1105--1131.

\bibitem[Per07]{Per07} B. Perthame, \underline{Transport Equations in Biology}, Birkh\"auser, Basel, 2007. 

\bibitem[Rot]{Rot} F. Rothe, \underline{Global Stability of Reaction-Diffusion Systems}, Springer, Berlin, 1984.

\bibitem[Sen81]{Seneta} E. Seneta, \underline{Non-negative matrices and Markov chains}, Springer Series in Statistic, 2nd Edition, Springer 1981.

\bibitem[SiMa]{SiMa} D. Siegel and D. MacLean, \textit{Global stability of complex balanced mechanisms}, Journal of Mathematical Chemistry {\bf 27} (2000) pp. 89--110.

\bibitem[Smo]{Smo} J. Smoller, \underline{Shock Waves and Reaction-Diffusion Equations}, Springer, New York, 1983.

\bibitem[Tos]{tos96} G. Toscani, \textit{Kinetic approach to the asymptotic behaviour of the solution to diffusion equations}, Rend. Mat. \textbf{16} (1996) pp. 329--346.	

\bibitem[TV]{toscani_villani1} G. Toscani, C. Villani,
\textit{Sharp entropy dissipation bounds and explicit rate of 
trend to equilibrium for the spatially homogeneous Boltzmann
equation.} Comm. Math. Phys. \textbf{203}, no. 3 (1999), pp. 667--706. 
	
\bibitem[TVO]{TVO} M. Thattai, A. van Oudenaarden, \textit{Intrinsic noise in gene regulatory networks}. Proc. Natl. Acad. Sci. USA \textbf{98} (2001), pp. 8614--8619.

\bibitem[Vil03]{vil03} C\'edric Villani, \textit{CercignaniÕs conjecture is sometimes true and always almost true}, Comm. Math. Phys., \textbf{234} no.3 (2003) pp. 45--490.

\bibitem[Vil09]{Vil09} C. Villani, \underline{Hypocoercivity}. Mem. Amer. Math. Soc. 202, 950 (2009), iv+141.

\bibitem[Vol]{Vol72} A.I. Volpert, \textit{Differential equations on graphs}, Mat. Sb. \textbf{88} (130) (1972) pp. 578--588 (in Russian) Math. USSR-Sb. \textbf{17} (1972) pp. 571--582 (in English).

\bibitem[VVV]{Vol94} A.I. Volpert, V.A. Volpert, VL.A. Volpert, \underline{Traveling Wave Solutions of Parabolic Systems}, American Mathematical Society, Providence, RI, 1994.
	
\end{thebibliography}
\end{document}